\DeclareMathOperator{\RMlt}{RMlt}
\DeclareMathOperator{\Aut}{Aut}
\newtheorem{theorem}{Theorem}[section]
\newtheorem{proposition}[theorem]{Proposition}
\newtheorem{lemma}[theorem]{Lemma}
\newtheorem{definition}[theorem]{Definition}
\newcounter{claim}
\renewcommand\theclaim{(\arabic{section}.\arabic{claim})}
\newenvironment{claim}{\refstepcounter{claim}\par\bigskip\noindent\textbf{\theclaim}}{\par\smallskip}
 \DeclareMathOperator{\rmlt}{RMlt}  
\DeclareMathOperator{\GL}{GL}
\title{Linear groups as right multiplication groups of quasifields}
\author{G\'abor P. Nagy} 
\email{nagyg@math.u-szeged.hu}
\address{Bolyai Institute, University of Szeged, Aradi v\'ertan\'uk tere 1, H-6720 Sze\-ged (Hungary)}
\thanks{The author is a Janos Bolyai Research Fellow. Supported by TAMOP-4.2.2/B-10/1-2010-0012 project of Hungary.}
\keywords{Quasifield, Right multiplication group, Translation plane, Autotopism, Loop}
\subjclass[2010]{51A40, 05B25, 20N05}
\begin{document}

\begin{abstract}
For quasifields, the concept of parastrophy is slightly weaker than isotopy. Parastrophic quasifields yield isomorphic translation planes but not conversely. We investigate the right multiplication groups of finite quasifields. We classify all quasifields having an exceptional finite transitive linear group as right multiplication group. The classification is up to parastrophy, which turns out to be the same as up to the isomorphism of the corresponding translation planes. 
\end{abstract}

\maketitle

\section{Introduction}

A \textit{translation plane} is often represented by an algebraic structure called a \textit{quasifield.} Many properties of the translation plane can be most easily understood by looking at the appropriate quasifield. However, isomorphic translation planes can be represented by nonisomorphic quasifields. Furthermore, the collineations do not always have a nice representation in terms of operations in the quasifields. 

Let $p$ be a prime number and $(Q,+,\cdot)$ be a quasifield of finite order $p^n$. We identify $(Q,+)$ with the vector group $(\mathbb{F}_p^n, +)$. With respect to the multiplication, the set $Q^*$ of nonzero elements of $Q$ form a \textit{loop.} The \textit{right multiplication maps} of $Q$ are the maps $R_a:Q\to Q$, $xR_a=x\cdot a$, where $a,x\in Q$. By the right distributive law, $R_a$ is a linear map of $Q=\mathbb{F}_p^n$. Clearly, $R_0$ is the zero map. If $a\neq 0$ then $R_a\in \GL(n,p)$. In geometric context, the set of right translations are also called the \textit{slope set} or the \textit{spread set} of the quasifield $Q$, cf. \cite[Chapter 5]{Handbook}. 

The \textit{right multiplication group} $\rmlt(Q)$ of the quasifield $Q$ is the linear group generated by the nonzero right multiplication maps. It is immediate to see that $\rmlt(Q)$ is a transitive linear group, that is, it acts transitively on the set of nonzero vectors of $Q=\mathbb{F}_p^n$. The complete classification of finite transitive linear groups is known, the proof relies on the classification theorem of finite simple groups. Roughly speaking, there are four infinite classes and $27$ exceptional constructions of finite transitive linear groups. 

The first question this paper deals with asks which finite transitive linear group can occur as right multiplication group of a finite quasifield. It turns out that some of the exceptional finite transitive linear groups may happen to be the right multiplication group of a quasifield. Since these groups are relatively small, in the second part of the paper, we are able to give an explicit classification of all quasifields whose right multiplication group is an exceptional finite linear group. These results are obtained with computer calculations using the computer algebra system GAP4 \cite{GAP4} and the program CLIQUER \cite{Cliquer}. 

Right multiplication groups of quasifields have not been studied intensively. The most important paper in this field is \cite{Kallaher} by M. Kallaher, containing results about finite quasifields with solvable right multiplication groups. Another related paper is \cite{NagyG} which deals with the group generated by the left and right multiplication maps of finite semifields. 

Finally, we notice that our results can be interpreted in the language of the theory of finite loops, as well. \textit{Loops} arise naturally in geometry when coordinatizing point-line incidence structures. Most importantly, the multiplicative structure $(Q^*,\cdot)$ of $Q$ is a loop. In fact, any finite loop $\hat Q$ gives rise to a quasifield, provided the right multiplication maps of $\hat{Q}$ generate a group which is permutation isomorphic to a subgroup of $\GL(n,q)$, where the latter is considered as a permutation group on the nonzero vectors of $\mathbb{F}_q^n$. Therefore in this paper, we investigate loops whose right multiplication groups are contained is some finite linear group $\GL(n,q)$. 

\medskip

There are many excellent surveys and monographs on translation planes and quasifields, see \cite{HughesPiper, Handbook, Luneburg} and the references therein. Our computational methods have similarities with those in \cite{CharnesDempwolff,Dempwolff}.

\section{Translation planes, spreads and quasifields}
\label{sec:geom}

For the shake of completeness, in this section we repeat the definitions of concepts which are standard in the theory of translation planes. Moreover, we briefly explain the relations between the automorphisms of these mathematical objects. 

\medskip

Let $\Pi$ be a finite projective plane. The line $\ell$ of $\Pi$ is a translation line if the translation group with respect to $\ell$ acts transitively (hence regularly) on the set of points $\Pi\setminus \ell$. Assume $\ell$ to be a translation line and let $\Pi^\ell$ be the affine plane obtained from $\Pi$ with $\ell$ as the line at infinity. Then $\Pi^\ell$ is a translation plane. By the theorems of Skornyakov-San Soucie and Artin-Zorn \cite[Theorem 6.18 and 6.20]{HughesPiper}, $\Pi$ is either Desarguesian or contains at most one translation line. This means that two finite translation planes are isomorphic if and only if the corresponding projective planes are. 

The relation between translation planes and quasifields is usually explained using the notion of \textit{(vector space) spreads.} 

\begin{definition}
Let $V$ be a vector space over the field $F$. We say that the collection $\sigma$ of subspaces is a \emph{spread} if (1) $A,B \in \sigma$, $A\neq B$ then $V=A\oplus B$, and (2) every nonzero vector $x\in V$ lies in a unique member of $\sigma$. The members of $\sigma$ are the \emph{components} of the spread. 
\end{definition}

If $\sigma$ is a spread in $V$ then Andr\'e's construction yields a translation plane $\Pi(\sigma)$ by setting $V$ as the set of points and the translates of the components of $\sigma$ as the set of lines of the affine plane. Conversely, if $\Pi$ is a finite translation plane with origin $O$ then we identify the point set of $\Pi$ with the group $\mathcal{T}(\Pi)$ of translations. As $\mathcal{T}(\Pi)$ is an elementary Abelian $p$-group, $\Pi$ becomes a vector space over (some extension of) $\mathbb{F}_p$ and the lines through $O$ are subspaces forming a spread $\sigma(\Pi)$. 

Andr\'e's construction implies a natural identification of the components of the spread with the parallel classes of the affine lines, and the points at infinity of the corresponding affine plane. 

The approach by spreads has many advantages. For us, the most important one is that they allow explicit computations in the group of collineation of the translation plane. Let $\Pi$ be a nondesarguesian translation plane and let us denote by $\mathcal{T}(\Pi)$ the group of translations of $\Pi$. The full group $\Aut(\Pi)$ of collineations contains $\mathcal{T}(\Pi)$ as a normal subgroup. Up to isomorphy, we can choose a unique point of origin $O$ in $\Pi$. The stabilizer $\mathcal{C}_O(\Pi)$ of $O$ in $\Aut(\Pi)$ is the \textit{translation complement} of $\Pi$ with respect to $O$. The full group $\Aut(\Pi)$ of collineations is the semidirect product of $\mathcal{T}(\Pi)$ and $\mathcal{C}_O(\Pi)$. In particular, $\mathcal{C}_O(\Pi)$ has the structure of a linear group of $V$. By \cite[Theorem 2.27]{Handbook}, the collineation group $\mathcal{C}_O(\Pi)$ is essentially the same as the group of automorphisms of the associated spread, where the latter is defined as follows.

\begin{definition}
Let $\sigma$ be a spread in the vector space $V$. The automorphism group $\Aut(\sigma)$ consists of the additive mappings of $V$ that permutes the components of $\sigma$ among themselves. 
\end{definition}

As the translations act trivially on the infinite line, the permutation action of $\Aut(\sigma)$ is equivalent with the action of $\Aut(\Pi)$ on the line at infinity.

Spreads are usually represented by a spread set of matrices. Fix the components $A,B$ of the spread $\sigma$ with underlying vector space $V$. The direct sum decomposition $V=A\oplus B$ defines the projections $p_A:V\to A$, $p_B:V\to B$. As for any component $C\in \sigma\setminus\{A,B\}$ we have $A\cap C=B\cap C=0$, the restrictions of $p_A$ and $p_B$ to $C$ are bijections $C\to A$, $C\to B$. Therefore, the map $u_C:A\to B$, $xu_C=(xp_A^{-1})p_B$ is a linear bijection from $A$ to $B$. When identifying $A,B$ with $\mathbb{F}_p^k$, $u_C$ can be given in matrix form $U_C$. The set $\mathcal{S}(\sigma)=\{U_C \mid C \in \sigma\}$ is called the \textit{spread set of matrices representing $\sigma$ relative to axes $(A,B)$.} This representation depends on the choice of $A,B\in \sigma$. It is also possible to think at a spread set as the collection $\mathcal{S}'(\sigma)=\{u_Du_C^{-1} \mid D \in \sigma \}$ of linear maps $A\to A$, with fixed $C$. 

A spread set $\mathcal{S}\subseteq \hom(A,B)$ can be characterized by the following property: For any elements $x\in A \setminus \{0\}$, $y\in B\setminus \{0\}$, there is a unique map $u\in \mathcal{S}$ such that $xu=y$. Indeed, if $\mathcal{S}=\mathcal{S}(\sigma)$ then $u=u_C$ where $C$ is the unique component containing $x\oplus y$. Conversely, $\mathcal{S}$ defines the spread
\[ \sigma(\mathcal{S})=\{0\oplus B, A \oplus 0\} \cup \{ \{ x\oplus xu \mid x \in A \} \mid u \in \mathcal{S} \} \]
with underlying vector space $V=A \oplus B$. 

\begin{definition}
The autotopism group of the spread set $\mathcal{S}$ of $k\times k$ matrices over $F$ consists of the pairs $(T,U)\in \GL(k,F) \times \GL(k,F)$ such that $T^{-1}\mathcal{S}U=\mathcal{S}$. 
\end{definition}

\cite[Theorems 5.10]{Handbook} says that autotopisms of spread sets relative to axes $(A,B)$ and automorphism of spreads fixing the components $A,B$ are essentially the same.

\medskip

By fixing a nondegenerate quadrangle $o,e,x,y$, any projective plane can be coordinatized by a \textit{planar ternary ring} (PTR), see \cite{HughesPiper}. Let $\Pi$ be a translation plane and fix affine points $o,e$ and infinite points $x,y$. Then, the coordinate PTR becomes a \textit{quasifield.}

\begin{definition}
The finite set $Q$ endowed with two binary operations $+$, $\cdot$ is called a \emph{finite (right) quasifield,} if
\begin{enumerate}
\item[(Q1)] $(Q,+)$ is an Abelian group with neutral element $0\in Q$, 
\item[(Q2)] $(Q\setminus\{0\},\cdot)$ is a loop,
\item[(Q3)] the right distributive law $(x+y)z=xz+yz$ holds, and,
\item[(Q4)] $x\cdot 0=0$ for each $x\in Q$.
\end{enumerate}
\end{definition}

The link between tranlation planes and quasifields can be extended to spread sets, as well. In fact, the set $\mathcal{S}(Q)=\{R_x \mid x\in Q\}$ of nonzero right multiplication maps of $Q$ is a spread set relative to the infinite points of the $x$- and $y$-axes of the coordinate system. Collineations correspond to \textit{autotopisms} of the quasifield.

\begin{definition}
Let $(Q,+,\cdot)$ be a quasifield, $S,T,U:Q\to Q$ bijections such that $S,U$ are additive and $0T=0$. The triple $(S,T,U)$ is said to be an \emph{autotopism} of $Q$ if for all $x,y\in Q$, the identity $xS\cdot yT=(x\cdot y)U$ holds. 
\end{definition}

It is easy to see that the triple $(S,T,U)$ is an autotopism of the quasifield $Q$ if and only if the pair $(S,U)$ is an autotopism of the associated spread set $\mathcal{S}(Q)$. 

We summarize the above considerations in the next proposition.

\begin{proposition} \label{pr:collgrps}
Let $\Pi$ be a translation plane, $\sigma$ the associated spread. Let $A,B$ be fixed components of $\sigma$ and $a,b$ the associated infinite points of $\Pi$.  Let $\mathcal{S}$ be the spread set of $\sigma$ relative to axes $(A,B)$. Let $c,d$ be arbitrary affine points of $\Pi$ such that $a,b,c,d$ are in general position, and let $(Q,+,\cdot)$ be the coordinate quasifield of $\Pi$ with respect to the quadrilateral $abcd$. Then the following groups are isomorphic.
\begin{enumerate}
\item The autotopism group of $\mathcal{S}$.
\item The stabilizer subgroup of the components $A,B$ in $\Aut(\sigma)$.
\item The stabilizer subgroup of the triangle $abc$ in the full collineation group $\Aut(\Pi)$. 
\item The autotopism group of $Q$. 
\end{enumerate}
\end{proposition}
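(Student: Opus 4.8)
The plan is to establish the proposition as a chain of group isomorphisms linking items (1), (2), (3) and (4), most of whose links are already recorded in this section. Two of them are essentially given: the equivalence of (1) and (2) is exactly \cite[Theorem 5.10]{Handbook} (autotopisms of the spread set relative to $(A,B)$ versus automorphisms of $\sigma$ fixing $A,B$), while the equivalence of (1) and (4) is the remark made just above, that $(S,T,U)$ is an autotopism of $Q$ if and only if $(S,U)$ is an autotopism of $\mathcal{S}(Q)$. Thus the genuine work is, first, to promote that remark to a bona fide isomorphism of groups, and second, to connect (2) with (3).

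For the link between (1) and (4) I would consider the map $(S,T,U)\mapsto(S,U)$ from the autotopism group of $Q$ to the autotopism group of $\mathcal{S}(Q)$ and verify that it is an injective homomorphism which is onto. Rewriting the autotopism identity $xS\cdot yT=(x\cdot y)U$ in terms of right multiplications yields $S^{-1}R_yU=R_{yT}$, so on the one hand $T$ is uniquely recovered from $(S,U)$ as the relabelling of $Q$ induced by the permutation $R_y\mapsto S^{-1}R_yU$ of $\mathcal{S}(Q)$, using that $y\mapsto R_y$ is a bijection of $Q^*$ onto $\mathcal{S}(Q)\setminus\{0\}$; this gives injectivity. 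On the other hand the same computation read backwards shows that any $(S,U)$ with $S^{-1}\mathcal{S}(Q)U=\mathcal{S}(Q)$ determines such a $T$, which gives surjectivity. Compatibility with composition is immediate.

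For the link between (2) and (3) I would use the structural facts assembled earlier. Taking the origin to be $O=c$, any collineation fixing the affine point $c$ lies in the translation complement $\mathcal{C}_c(\Pi)$, since $\Aut(\Pi)=\mathcal{T}(\Pi)\rtimes\mathcal{C}_c(\Pi)$ and the nontrivial translations move $c$. By \cite[Theorem 2.27]{Handbook} the group $\mathcal{C}_c(\Pi)$ is isomorphic to $\Aut(\sigma)$, and, as already observed, this isomorphism intertwines the action of $\mathcal{C}_c(\Pi)$ on the line at infinity with the action of $\Aut(\sigma)$ on the components of $\sigma$. Under the fixed correspondence $a\leftrightarrow A$, $b\leftrightarrow B$, fixing the triangle $abc$ amounts to fixing $c$ together with $a$ and $b$, which on the spread side is precisely the stabiliser of $A$ and $B$ in $\Aut(\sigma)$. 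Composing the three isomorphisms then proves the proposition.

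I expect the main obstacle to be bookkeeping rather than any deep difficulty. One must pin down the coordinatisation so that the autotopism group of $Q$---which fixes the origin and the two axis directions while leaving the unit point free---matches a three-vertex triangle stabiliser and not a four-point stabiliser, and one must keep the two infinite vertices $a,b$ fixed individually (matching the ordered pair $(A,B)$ in the definition of a spread-set autotopism) rather than merely setwise. Once the dictionary $a\leftrightarrow A$, $b\leftrightarrow B$, $c\leftrightarrow O$ is fixed, each of the three equivalences is a routine unwinding of definitions against the two cited theorems of \cite{Handbook}.
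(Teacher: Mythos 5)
Your proposal is correct and follows essentially the same route as the paper: the paper gives no separate proof but states the proposition as a summary of the preceding considerations, namely \cite[Theorem 2.27]{Handbook} (translation complement $\cong\Aut(\sigma)$, with matching actions on the infinite line), \cite[Theorem 5.10]{Handbook} (spread-set autotopisms $\cong$ spread automorphisms fixing $A,B$), and the remark that $(S,T,U)$ is an autotopism of $Q$ iff $(S,U)$ is an autotopism of $\mathcal{S}(Q)$. Your write-up merely makes explicit the bookkeeping (recovering $T$ from $(S,U)$ via $R_{yT}=S^{-1}R_yU$, and reducing the triangle stabilizer to the translation complement at $c$) that the paper leaves implicit.
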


In particular, the structure of the autotopism group of the coordinate quasifield does not depend on the choice of the base points $c,d$.

\section{Isotopy, parastrophy and computation}

When investigating the isomorphism between translation planes, the \textit{isotopy of quasifields} is a central concept. We borrow the concept of \textit{parastrophy} from the theory of loops in order to define a wider class of equivalence for quasifields. 

\begin{definition}
Let $\mathcal{S}, \mathcal{S}'$ be spread sets of matrices in $\GL(d,p)$. We say that $\mathcal{S}, \mathcal{S}'$ are 
\begin{enumerate}
\item \emph{isotopes} if there are matrices $T,U \in\GL(d,p)$ such that $T^{-1}\mathcal{S}U=\mathcal{S}'$ holds. 
\item \emph{parastrophes} if there are matrices $T,U \in\GL(d,p)$ such that $T^{-1}\mathcal{S}U=\mathcal{S}'$ or $T^{-1}\mathcal{S}U=(\mathcal{S}')^{-1}$ holds. 
\end{enumerate}
Analogously, we say that the quasifields $Q,Q'$ are isotopic (parastrophic) if their sets of nonzero right multiplications of matrices are isotopic (parastrophic) as spread sets of matrices. 
\end{definition}

In Section \ref{sec:geom}, we explained the method of obtaining the spread set $\mathcal{S}$ of matrices from the spread $\sigma$ by fixing the components $A,B$. It follows that interchanging $A$ and $B$, the resulting spread set of matrices will be $\mathcal{S}^{-1}=\{u^{-1} \mid u \in \mathcal{S} \}$. Hence, $\mathcal{S}$ and $\mathcal{S}^{-1}$ determine isomorphic translation planes. Taking into account \cite[Propositions 5.36 and 5.37]{Handbook}, we obtain that parastrophic quasifields (or spread sets) determine isomorphic translation planes. On the one hand, the next proposition shows that the right multiplication group of a quasifield is parastrophy invariant. On the other hand, it will give us an effective method for the computation of parastrophy for quasifields with ``small'' right multiplication group. 

\begin{lemma} \label{lm:normal}
Let $\mathcal{S}_1, \mathcal{S}_2$ be spread sets of matrices in $\GL(d,p)$. Assume that $1 \in \mathcal{S}_1, \mathcal{S}_2$ and define the transitive linear groups $G_1=\langle \mathcal{S}_1 \rangle$ and $G_2=\langle \mathcal{S}_2 \rangle$ generated by the spread sets. If $(T,U)$ defines a parastrophy between $\mathcal{S}_1$ and $\mathcal{S}_2$ then $T^{-1}U\in \mathcal{S}_2$ and $T^{-1}G_1T =U^{-1}G_1U =G_2$. In particular, if $G_1=G_2$  then $U,T \in N_{\GL(d,p)}(G_1)$. 
\end{lemma}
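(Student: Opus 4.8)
The plan is to notice that neither the spread-set structure nor the geometry is really used here: the assertion is a purely group-theoretic fact about two generating sets that both contain the identity and are related by a two-sided transformation. So I would first isolate and prove the following clean version, then read off the lemma. Suppose $\mathcal{A},\mathcal{B}\subseteq\GL(d,p)$ satisfy $1\in\mathcal{A}$, $1\in\mathcal{B}$ and $T^{-1}\mathcal{A}U=\mathcal{B}$, and write $H=\langle\mathcal{A}\rangle$, $K=\langle\mathcal{B}\rangle$. This covers the isotopy case with $(\mathcal{A},\mathcal{B})=(\mathcal{S}_1,\mathcal{S}_2)$, and the normalization $1\in\mathcal{S}_i$ is exactly what the quasifield setting supplies, since the right multiplication $R_e$ by the loop identity $e$ is the identity map and hence lies in every $\mathcal{S}(Q)$.

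The engine is to exploit $1\in\mathcal{A}\cap\mathcal{B}$ on both sides. Applying $T^{-1}(\cdot)U$ to $1\in\mathcal{A}$ gives $T^{-1}U\in\mathcal{B}$, which is the first assertion. Conversely, since $1\in\mathcal{B}=T^{-1}\mathcal{A}U$, there is some $s'\in\mathcal{A}$ with $T^{-1}s'U=1$, i.e. $s'=TU^{-1}$; hence the single element $g:=UT^{-1}=(s')^{-1}$ lies in $H$. This element is precisely what upgrades one-sided inclusions to equalities.

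Next I would establish $T^{-1}HT=K$ by double inclusion. For $K\subseteq T^{-1}HT$: every generator of $K$ can be written as $T^{-1}sU=T^{-1}(sg)T$ with $sg\in H$, so it lies in $T^{-1}HT$. For $T^{-1}HT\subseteq K$: for $s\in\mathcal{A}$ use $T^{-1}sT=(T^{-1}sU)(T^{-1}U)^{-1}$, where the first factor is a generator of $K$ and the second is the inverse of $T^{-1}U\in\mathcal{B}\subseteq K$; hence $T^{-1}sT\in K$, and passing to the generated group gives the inclusion. Finally $U^{-1}HU=(gT)^{-1}H(gT)=T^{-1}(g^{-1}Hg)T=T^{-1}HT=K$, because $g\in H$ normalizes $H$. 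This proves the middle chain of equalities in the isotopy case.

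For the parastrophy case $T^{-1}\mathcal{S}_1U=\mathcal{S}_2^{-1}$ I would simply invert: taking inverses of all elements shows this is equivalent to $U^{-1}\mathcal{S}_1^{-1}T=\mathcal{S}_2$, and $\mathcal{S}_1^{-1}$ still contains $1$ and still generates $G_1$. Thus the clean statement applies verbatim to the pair $(U,T)$ and the sets $(\mathcal{S}_1^{-1},\mathcal{S}_2)$, yielding $U^{-1}G_1U=T^{-1}G_1T=G_2$ together with $U^{-1}T\in\mathcal{S}_2$ (the only asymmetry being that the distinguished element is now $U^{-1}T$ rather than $T^{-1}U$). The closing ``in particular'' is then immediate: if $G_1=G_2$ then $T^{-1}G_1T=G_1=U^{-1}G_1U$ says exactly that $T,U\in N_{\GL(d,p)}(G_1)$. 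The one step I would be most careful about is the two-sided use of $1\in\mathcal{S}_1$ and $1\in\mathcal{S}_2$ to manufacture $g=UT^{-1}\in G_1$: dropping either containment leaves only the conjugacy and the inclusions $T^{-1}G_1T\subseteq G_2\supseteq U^{-1}G_1U$, not the equalities with $G_2$.
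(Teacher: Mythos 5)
Your proof is correct and takes essentially the same route as the paper's: both use $1\in\mathcal{S}_1$ to obtain $T^{-1}U\in\mathcal{S}_2$, rewrite the isotopy as a conjugation composed with right translation by that element, and reduce the parastrophy case to the isotopy case by inverting. If anything, yours is slightly more complete: the paper computes $T^{-1}\mathcal{S}_1T=\mathcal{S}_2\,U^{-1}T\subseteq G_2$ and asserts the equality $T^{-1}G_1T=G_2$ outright, while the reverse inclusion $G_2\subseteq T^{-1}G_1T$ requires exactly your element $g=UT^{-1}\in G_1$ manufactured from $1\in\mathcal{S}_2$; likewise, your remark that the inverse-parastrophy case yields $U^{-1}T\in\mathcal{S}_2$ rather than $T^{-1}U\in\mathcal{S}_2$ is a precision that the printed statement and proof gloss over (harmlessly, since the later applications only use $T^{-1}U\in G_2$).
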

\begin{proof}
As $\langle \mathcal{S} \rangle=\langle \mathcal{S}^{-1} \rangle$, it suffices to deal with isotopes. Since $1\in \mathcal{S}_1$, we have $T^{-1}U=T^{-1}1U\in \mathcal{S}_2$. Moreover, $T^{-1}\mathcal{S}_1 T=T^{-1}\mathcal{S}_1 U \cdot U^{-1}T =\mathcal{S}_2\,U^{-1}T \subseteq G_2$, which implies $T^{-1}G_1 T=G_2$. The equation $U^{-1}G_1U=G_2$ follows from $T^{-1}U\in G_2$. 
\end{proof}

We will use the above lemma to compute the classification of quasifields up to parastrophy. 

\begin{proposition} \label{pr:parast}
Let $\mathcal{S}, \mathcal{S}'$ be spread sets of matrices in $\GL(d,p)$. Assume that $1\in \mathcal{S}, \mathcal{S}'$ and $G=\langle \mathcal{S} \rangle = \langle \mathcal{S}' \rangle$.  Let $G^*,G^{**}$ be the permutation groups acting on $G$, where the respective actions are the right regular action of $G$ on itself, and the action of $N_{\GL(d,p)}(G)$ on $G$ by conjugation. Let $\iota$ be the inverse map $g\mapsto g^{-1}$ on $G$. Define the permutation group $G^\sharp=\langle G^*, G^{**},\iota \rangle$ acting on $G$. Then, $\mathcal{S}, \mathcal{S}'$ are parastrophic if and only if they lie in the same $G^\sharp$-orbit. 
\end{proposition}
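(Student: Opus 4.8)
The plan is to prove both implications by translating each of the three generators of $G^\sharp$ into an explicit parastrophy, and conversely by splitting an arbitrary parastrophy into a conjugation followed by a right translation. The whole argument rests on Lemma~\ref{lm:normal} together with two elementary observations: first, that right (or left) multiplication of a spread set by a fixed invertible matrix again yields a spread set (if $xu=yg^{-1}$ has a unique solution $u\in\mathcal{S}$, then $ug$ is the unique element of $\mathcal{S}g$ with $x(ug)=y$), and second, that $\mathcal{S}^{-1}$ is a spread set whenever $\mathcal{S}$ is. I would begin by checking that $G^\sharp$ is well defined as a subgroup of $\mathrm{Sym}(G)$: right regular multiplication by $g\in G$, conjugation by $n\in N_{\GL(d,p)}(G)$, and inversion all map $G$ into itself, the middle one precisely because $n$ normalizes $G$.

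For the direction that the orbit condition implies parastrophy, I would observe that each generator of $G^\sharp$ carries a spread set to a parastrophic one: the permutation $\rho_g\in G^*$ sends $\mathcal{S}$ to $\mathcal{S}g=1^{-1}\mathcal{S}g$, an isotopy; the permutation $\kappa_n\in G^{**}$ sends $\mathcal{S}$ to $n^{-1}\mathcal{S}n$, again an isotopy; and $\iota$ sends $\mathcal{S}$ to $\mathcal{S}^{-1}$, a parastrophy by definition. Since parastrophy is an equivalence relation (isotopy is transitive, and inversion merely swaps the roles of the two axes) and each intermediate subset remains a spread set, composing the letters of any word in $G^\sharp$ shows that every member of the $G^\sharp$-orbit of $\mathcal{S}$ is parastrophic to $\mathcal{S}$; in particular $\mathcal{S}'$ is.

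For the converse, suppose first that $\mathcal{S}$ and $\mathcal{S}'$ are isotopic, say $T^{-1}\mathcal{S}U=\mathcal{S}'$. By Lemma~\ref{lm:normal}, $T,U\in N_{\GL(d,p)}(G)$ and $g:=T^{-1}U\in\mathcal{S}'\subseteq G$. The key algebraic step is the factorization
\[
 \mathcal{S}'=T^{-1}\mathcal{S}U=(T^{-1}\mathcal{S}T)(T^{-1}U)=(T^{-1}\mathcal{S}T)\,g,
\]
in which the inner factor $T^{-1}\mathcal{S}T$ is the image of $\mathcal{S}$ under $\kappa_T\in G^{**}$ (legitimate because $T$ normalizes $G$) and the final right multiplication by $g\in G$ is $\rho_g\in G^*$. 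Hence $\mathcal{S}'=\mathcal{S}^{\kappa_T\rho_g}$ lies in the $G^\sharp$-orbit of $\mathcal{S}$. If instead $T^{-1}\mathcal{S}U=(\mathcal{S}')^{-1}$, then $(\mathcal{S}')^{-1}$ is a spread set containing $1$ and generating $G$, so the same argument gives $(\mathcal{S}')^{-1}=\mathcal{S}^{\kappa_T\rho_g}$, and one extra application of $\iota$ yields $\mathcal{S}'=\mathcal{S}^{\kappa_T\rho_g\iota}$, again in the orbit.

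The substance of the proof is concentrated entirely in Lemma~\ref{lm:normal}: it is exactly the conclusions $T,U\in N_{\GL(d,p)}(G)$ and $T^{-1}U\in\mathcal{S}'$ that make the above factorization meaningful, by guaranteeing that $\kappa_T$ is a conjugation lying in $G^{**}$ and that $g=T^{-1}U$ lies in $G$ so that $\rho_g\in G^*$. I expect the only point needing care to be the bookkeeping: ensuring that the intermediate subsets stay inside $G$ so that successive generators of $G^\sharp$ can be applied, and handling the inversion case of parastrophy by first reducing it to the isotopy case for $(\mathcal{S}')^{-1}$ and then restoring $\mathcal{S}'$ with a final $\iota$.
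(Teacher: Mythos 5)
Your proposal is correct and follows essentially the same route as the paper: the forward direction by checking that each generator of $G^\sharp$ (right translation by $g\in G$, conjugation by $n\in N_{\GL(d,p)}(G)$, and inversion) produces a parastrophe, and the converse via Lemma~\ref{lm:normal} and the factorization $\mathcal{S}'=(T^{-1}\mathcal{S}T)\cdot(T^{-1}U)$. You merely spell out details the paper leaves implicit, such as the reduction of the inversion case to the isotopy case applied to $(\mathcal{S}')^{-1}$ followed by one application of $\iota$.
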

\begin{proof}
For any $U\in G$, $T\in N_{\GL(d,p)}(G)$, the sets $T^{-1}\mathcal{S}T$, $\mathcal{S}U$ and $\mathcal{S}^{-1}$ are parastrophes of $\mathcal{S}$. Hence, if $\mathcal{S}, \mathcal{S}'$ are in the same $G^\sharp$-orbit then they are parastrophes. Conversely, assume that the pair $(T,U)$ defines an isotopy between $\mathcal{S}$ and $\mathcal{S}'$. By Lemma \ref{lm:normal}, $\mathcal{S}'=T^{-1}\mathcal{S}T\cdot T^{-1}U$ where $T\in N_{\GL(d,p)}(G)$ and $T^{-1}U\in G$, that is, they are in the same $G^\sharp$-orbit. The proof goes similarly for the case of parastrophy.
\end{proof}

In general, the explicit computation of the collineation group of a translation plane is very challenging. Another application of Lemma \ref{lm:normal} is the computation of the autotopism group of a quasifield, that is, the computation of the stabilizer of two infinite points in the full collineation group of the corresponding translation plane. 

\begin{proposition} \label{pr:autotgr}
Let $\mathcal{S}$ be a spread set of matrices in $\GL(d,p)$ with $1\in \mathcal{S}$ and $G=\langle \mathcal{S} \rangle$. Define the group
\[H=\{(T,U) \in N_{\GL(d,p)}(G)^2 \mid T^{-1}U\in G\}\]
and the permutation action $\Phi:G\times H\to G$ of $H$ on $G$ by
\[\Phi:(X,(T,U)) \mapsto T^{-1}XU.\]
Then, $H$ and $\Phi$ are well defined. The isotopism group of $\mathcal{S}$ is the setwise stabilizer of $\mathcal{S}$ in $H$ with respect to the action $\Phi$. 
\end{proposition}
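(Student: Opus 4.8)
The plan is to establish the three claims of the statement separately: that $H$ is a group, that $\Phi$ is a well-defined action of $H$ on $G$, and that the autotopism group of $\mathcal{S}$ is exactly the setwise stabilizer of $\mathcal{S}$ under $\Phi$. The first two are routine verifications inside the normalizer $N:=N_{\GL(d,p)}(G)$, and essentially all the content of the proposition is concentrated in the third claim, where Lemma \ref{lm:normal} does the work.

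First I would check that $H$ is a subgroup of $N\times N$. It contains $(1,1)$ since $1^{-1}1=1\in G$. For closure, given $(T_1,U_1),(T_2,U_2)\in H$ I would write
\[ (T_1T_2)^{-1}(U_1U_2)=\bigl(T_2^{-1}(T_1^{-1}U_1)T_2\bigr)\cdot\bigl(T_2^{-1}U_2\bigr), \]
and note that the first factor lies in $G$ because $T_1^{-1}U_1\in G$ and $T_2\in N$ normalizes $G$, while the second lies in $G$ by assumption; hence the product is in $G$ and $(T_1T_2,U_1U_2)\in H$. For inverses, from $T^{-1}U\in G$ we get $U^{-1}T\in G$, and since $T\in N$,
\[ TU^{-1}=T(U^{-1}T)T^{-1}\in G, \]
so $(T^{-1},U^{-1})\in H$.

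Next I would show that $\Phi$ is well defined and is a (right) action. For $X\in G$ and $(T,U)\in H$, the factorization $T^{-1}XU=(T^{-1}XT)(T^{-1}U)$ shows $\Phi(X,(T,U))\in G$, since $T^{-1}XT\in G$ as $T\in N$ and $T^{-1}U\in G$ by definition of $H$. The action axioms are immediate: $\Phi(X,(1,1))=X$, and
\[ \Phi\bigl(\Phi(X,(T_1,U_1)),(T_2,U_2)\bigr)=T_2^{-1}T_1^{-1}XU_1U_2=\Phi\bigl(X,(T_1T_2,U_1U_2)\bigr), \]
so $\Phi$ is a right action of $H$ on $G$.

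Finally I would identify the autotopism group of $\mathcal{S}$ with the stabilizer $\{(T,U)\in H\mid T^{-1}\mathcal{S}U=\mathcal{S}\}$. One inclusion is trivial, since any $(T,U)\in H$ fixing $\mathcal{S}$ setwise satisfies $T^{-1}\mathcal{S}U=\mathcal{S}$, which is precisely the defining property of an autotopism. The substantive inclusion is that every autotopism already lies in $H$: an autotopism $(T,U)$ satisfies $T^{-1}\mathcal{S}U=\mathcal{S}$ and is thus an isotopy of $\mathcal{S}$ with itself, so Lemma \ref{lm:normal} applied with $\mathcal{S}_1=\mathcal{S}_2=\mathcal{S}$ and $G_1=G_2=G$ gives $T,U\in N$ and $T^{-1}U\in\mathcal{S}\subseteq G$, whence $(T,U)\in H$. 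This is the only place where anything beyond bookkeeping is needed, and the genuine content has been pushed into Lemma \ref{lm:normal}; the real point of the proposition is that this lemma confines the search for autotopisms, a priori ranging over all of $\GL(d,p)^2$, to the much smaller group $H$ assembled from the normalizer of $G$, which is what renders the autotopism group computationally accessible.
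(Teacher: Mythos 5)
Your proof is correct and takes essentially the same route as the paper: the same closure and inverse computations inside $N_{\GL(d,p)}(G)$, the same factorization $T^{-1}XU=(T^{-1}XT)(T^{-1}U)$ for well-definedness, and the same appeal to Lemma \ref{lm:normal} to place every autotopism inside $H$. You merely spell out details the paper leaves implicit (the action axioms and the explicit application of the lemma with $\mathcal{S}_1=\mathcal{S}_2=\mathcal{S}$), which is fine.
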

\begin{proof}
In order to see that $H$ is a group, take elements $(T,U), (T_1,U_1)\in H$. On the one hand, $(T^{-1})^{-1}U^{-1}=T(T^{-1}U)^{-1}T^{-1} \in TGT^{-1}=G$, which implies $(T^{-1},U^{-1}) \in H$. On the other hand, $(TT_1,UU_1)\in H$ follows from
\[(TT_1)^{-1}UU_1 = T_1^{-1}(T^{-1}U)T_1 \cdot T_1^{-1}U_1 \in T_1^{-1}GT_1\cdot G =G.\]
Since $T^{-1}XU=T^{-1}XT\cdot T^{-1}U\in G$ holds for all $X \in G$, $H$ and $\Phi$ are well defined. The claim for the autotopism group of $\mathcal{S}$ follows from Lemma \ref{lm:normal}. 
\end{proof}

As for an exceptional finite transitive linear group $G$, $N_{\GL(d,p)}(G)$ is also exceptional, it is a small subgroup of $\GL(d,p)$ and the autotopism group of $\mathcal{S}$ is computable by GAP4. Using Proposition \ref{pr:collgrps}, we obtain a straightforward method for computing the stabilizer of two infinite points of our translation planes. However, the stabilizer of some infinite points is not an invariant of the translation plane in general. 

More precisely, let $\Pi$ be a translation plane with infinite line $\ell_\infty$ and $P=\{a_1,\ldots,a_t\} \subseteq \ell_\infty$ be a set of infinite points. Denote by $\mathcal{B}_P$ the pointwise stabilizer of $P$ in $\Aut(\Pi)$. In the rest of this section we show that under some circumstances, the structure of $\mathcal{B}_P$ is an invariant of $\Pi$. We start with a lemma on general permutation groups. 

\begin{lemma} \label{lm:stab}
Let $G$ be a group acting on the finite set $X$ (not necessarily faithfully). For any $Y\subseteq X$, we denoty by $F(Y)$ the pointwise stabilizer of $Y$. Let $Y_1,Y_2$ be subsets of $X$ such that $|Y_1|=|Y_2|< |X|/2$, and suppose that $F(Y_i)$ acts transitively on $X\setminus Y_i$, $i=1,2$. Then, there is an element $g\in G$ with $Y_2^g=Y_1$. In particular, the subgroups $F(Y_1), F(Y_2)$ are conjugate in $G$. 
\end{lemma}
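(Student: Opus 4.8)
The plan is to find a single element $g\in G$ with $Y_2^g=Y_1$; the conjugacy of the stabilizers is then automatic, since $Y_2^g=Y_1$ gives $g^{-1}F(Y_2)g=F(Y_2^g)=F(Y_1)$. To produce such a $g$ I would argue by repeatedly enlarging the intersection $Y_1\cap Y_2$. The key preliminary observation is that the hypotheses are stable under translating $Y_2$ by $G$: for any $h\in G$ the pointwise stabilizer of $Y_2^h$ equals $h^{-1}F(Y_2)h$, which is transitive on $(X\setminus Y_2)^h=X\setminus Y_2^h$. Hence it suffices to show that whenever $Y_1\ne Y_2$ (with both hypotheses in force) there is $g\in G$ with $|Y_2^g\cap Y_1|>|Y_1\cap Y_2|$; iterating and using $|Y_1|=|Y_2|$ then drives the intersection up to the common size, forcing $Y_2^g=Y_1$. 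Equivalently one may phrase this as an extremal argument, choosing the translate of $Y_2$ meeting $Y_1$ in the largest number of points and showing that maximality forces equality.

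Write $W=Y_1\cap Y_2$. The point I would stress is why a single stabilizer does not suffice: $F(Y_1)$ fixes $Y_1$ (hence $W$) pointwise and only permutes $X\setminus Y_1$ among itself, so $Y_2^h\cap Y_1=W$ for every $h\in F(Y_1)$, and symmetrically for $F(Y_2)$. Thus neither subgroup alone can enlarge the intersection, and the two must be combined. I therefore introduce $K=\langle F(Y_1),F(Y_2)\rangle$. Since both generators fix $W$ pointwise, so does $K$; and since both generators preserve $X\setminus W$ setwise, so does $K$.

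The crux is the claim that $K$ acts transitively on $X\setminus W$, and here I would use the size hypothesis decisively. We have $X\setminus W=(X\setminus Y_1)\cup(X\setminus Y_2)$, and $X\setminus Y_i$ is a single orbit of $F(Y_i)$, hence lies in a single $K$-orbit. These two sets overlap, because $(X\setminus Y_1)\cap(X\setminus Y_2)=X\setminus(Y_1\cup Y_2)$ is nonempty: indeed $|Y_1\cup Y_2|\le 2t<|X|$ by the assumption $t<|X|/2$. Two $K$-orbit pieces sharing a point lie in the same $K$-orbit, so $X\setminus W$ is a single $K$-orbit, proving the claim. This step -- upgrading the mere pointwise transitivity of the two complements to genuine transitivity of the combined group on all of $X\setminus W$ -- is where I expect the real content to sit, and it is exactly what the inequality $t<|X|/2$ is for.

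To finish, assume $Y_1\ne Y_2$. As $|Y_1|=|Y_2|=t$ and $|W|<t$, I can pick $a\in Y_1\setminus Y_2$ and $b\in Y_2\setminus Y_1$; both lie in $X\setminus W$, since $a\notin Y_2$ and $b\notin Y_1$ while $W\subseteq Y_1\cap Y_2$. By the claim there is $g\in K\le G$ with $b^g=a$, and $g$ fixes $W$ pointwise. Then $W=W^g\subseteq Y_2^g$ with $W\subseteq Y_1$, while $b^g=a\in Y_2^g\cap Y_1$ and $a\notin W$; hence $Y_2^g\cap Y_1\supseteq W\cup\{a\}$ has more than $|W|$ elements. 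This is the required strict increase, and the induction (or the maximality of the chosen translate) yields an element $g$ with $Y_2^g=Y_1$, completing the proof.
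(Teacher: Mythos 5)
Your proof is correct and is essentially the paper's argument: the paper also runs the same extremal/maximal-intersection scheme, using the nonemptiness of $X\setminus(Y_1\cup Y_2)$ (from $|Y_1|=|Y_2|<|X|/2$) to produce the intersection-increasing element explicitly as $h=g_1^{-1}g_2$, where $g_1\in F(Y_1)$ and $g_2\in F(Y_2)$ send a common outside point $x$ to $y_2$ and $y_1$ respectively --- exactly the element your orbit-gluing claim for $K=\langle F(Y_1),F(Y_2)\rangle$ yields when unwound. The only difference is presentational: you phrase the construction as transitivity of $K$ on $X\setminus(Y_1\cap Y_2)$, while the paper writes the composite element down directly.
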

\begin{proof}
Up to the action of $G$ on the orbit $Y_2^G$, we may assume that $|Y_1\cap Y_2|\geq |Y_1\cap Y_2^g|$ for all $g\in G$. Suppose that $Y_1\neq Y_2$ and take elements $x\in X\setminus(Y_1\cup Y_2)$, $y_1\in Y_1\setminus Y_2$, $y_2\in Y_2\setminus Y_1$. As $F(Y_i)$ acts transitively on $X\setminus Y_i$, there are elements $g_1\in F(Y_1)$, $g_2 \in F(Y_2)$ such that $x^{g_1}=y_2$ and $x^{g_2}=y_1$. Put $h=g_1^{-1}g_2$. Then $h\in F(Y_1\cap Y_2)$ and $y_2^h=y_1\in Y_1$. This implies $|Y_1\cap Y_2^h|>|Y_1\cap Y_2|$, a contradiction. 
\end{proof}

We can apply the lemma for the stabilizer of infinite points of a translation plane. 

\begin{proposition} \label{pr:infstab}
Let $\Pi$ be a translation plane of order $q$, with infinite line $\ell_\infty$. For a subset $P\subseteq \ell_\infty$, let $\mathcal{B}_P$ denote the pointwise stabilizer subgroup in $\Aut(\Pi)$. Fix the integer $t\leq (q+1)/2$ and define the set
\[ \mathcal{D}_t=\{P\subseteq \ell_\infty \mid |P|=t \mbox{ and $\mathcal{B}_P$ act transitively on $\ell_\infty \setminus P$} \}. \]
Then, $\Aut(\Pi)$ acts transitively on $\mathcal{D}_t$. In particular, the structure of $\mathcal{B}_P$ (as a permutation group on $\Pi\cup \ell_\infty$) does not depend on the particular choice $P\in \mathcal{D}_t$. \qed
\end{proposition}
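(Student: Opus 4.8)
The plan is to deduce this immediately from Lemma \ref{lm:stab}, applied to the permutation action of $G=\Aut(\Pi)$ on the infinite line, which I take as the underlying set $X=\ell_\infty$. The first step is to reconcile the two notions of stabilizer in play: for $P\subseteq\ell_\infty$, the pointwise stabilizer $\mathcal{B}_P$ in $\Aut(\Pi)$ is exactly the pointwise stabilizer $F(P)$ appearing in Lemma \ref{lm:stab} for this action, since a collineation fixes an infinite point as a point of $\Pi$ precisely when it fixes it in the restricted action on $\ell_\infty$. Note that $|X|=|\ell_\infty|=q+1$, and that this action need not be faithful (the translations act trivially on $\ell_\infty$), which is harmless because Lemma \ref{lm:stab} is stated without a faithfulness assumption.

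Next I would verify the hypotheses of the lemma for two arbitrary members $P_1,P_2\in\mathcal{D}_t$, setting $Y_1=P_1$ and $Y_2=P_2$. We have $|Y_1|=|Y_2|=t\leq (q+1)/2=|X|/2$, and by the very definition of $\mathcal{D}_t$ each $F(Y_i)=\mathcal{B}_{P_i}$ acts transitively on $X\setminus Y_i=\ell_\infty\setminus P_i$, which is precisely the transitivity hypothesis required. Lemma \ref{lm:stab} then yields an element $g\in G=\Aut(\Pi)$ with $P_2^{\,g}=P_1$. As $P_1,P_2$ were arbitrary, this shows that $\Aut(\Pi)$ acts transitively on $\mathcal{D}_t$.

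For the concluding assertion I would use that pointwise stabilizers of setwise conjugate subsets are conjugate: from $P_2^{\,g}=P_1$ one obtains $\mathcal{B}_{P_2}^{\,g}=\mathcal{B}_{P_1}$. Since $g$ is a collineation acting on the entire incidence structure $\Pi\cup\ell_\infty$, conjugation by $g$ is an isomorphism of permutation groups on $\Pi\cup\ell_\infty$ taking $\mathcal{B}_{P_2}$ to $\mathcal{B}_{P_1}$. Hence the permutation-group structure of $\mathcal{B}_P$ on $\Pi\cup\ell_\infty$ is independent of the choice of $P\in\mathcal{D}_t$, as claimed.

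The one point that genuinely requires care, and which I expect to be the main obstacle, is the gap between the strict bound $|Y_i|<|X|/2$ demanded by Lemma \ref{lm:stab} and the non-strict hypothesis $t\leq(q+1)/2$ here. When $q$ is even, $(q+1)/2$ is not an integer, so $t\leq(q+1)/2$ already forces $t\leq q/2<(q+1)/2$ and the lemma applies verbatim. When $q$ is odd and $t=(q+1)/2=|X|/2$, the element $x\in X\setminus(P_1\cup P_2)$ used in the proof of Lemma \ref{lm:stab} may fail to exist, namely exactly when $P_1$ and $P_2$ are complementary halves of $\ell_\infty$; this extremal case would need a separate treatment (either verifying directly that two such complementary $t$-sets in $\mathcal{D}_t$ are interchanged by a collineation, or confining the statement to $t<(q+1)/2$).
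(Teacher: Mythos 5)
Your proof is exactly the paper's: Proposition \ref{pr:infstab} carries no written proof at all beyond the preceding remark that Lemma \ref{lm:stab} applies to the action of $\Aut(\Pi)$ on $X=\ell_\infty$ with $F(P)=\mathcal{B}_P$, which is precisely the application you spell out, including the conjugacy argument for the final assertion. Your closing caveat is in fact a sharp observation about the paper itself rather than a flaw in your argument: Lemma \ref{lm:stab} requires the strict bound $|Y_i|<|X|/2$ while the proposition permits $t=(q+1)/2$ when $q$ is odd, a boundary case where the lemma genuinely fails (two complementary halves of $\ell_\infty$, each with pointwise stabilizer transitive on the other, can be setwise invariant and hence not conjugate), although this case never occurs in the paper's uses of the proposition, where $t=2$, $q=81$ and $t=3$, $q=16$.
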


\section{Sharply transitive sets and permutation graphs}

Let $G$ be a permutation group acting on the finite set $\Omega$, $n=|\Omega|$ is the degree of $G$. The subset $S\subseteq G$ is a \textit{sharply transitive set of permutations} if for any $x,y\in \Omega$ there is a unique element $\sigma\in S$ such that $x\sigma=y$. Sharply transitive sets can be characterized by the property $|S|=|\Omega|$ and, for all $\sigma,\tau \in S$, $\sigma \tau^{-1}$ is fixed point free. In particular, if $1\in S$ then all $\sigma \in S\setminus \{1\}$ are fixed point free elements of $G$. 

\begin{definition}
Let $G$ be a finite permutation group. The pair $\mathcal{G}=(V,\mathcal{E})$ is the \emph{permutation graph} of $G$, where $V$ is the set of fixed point free elements of $G$ and the edge set $\mathcal{E}$ consists of the pairs $(x,y)$ where $xy^{-1}\in V$. 
\end{definition}

The set $K$ of vertices is a \textit{$k$-clique} if $|K|=k$ and all elements of $K$ are connected. Sharply transitive subsets of $G$ containing $1$ correspond precisely the $(n-1)$-cliques of the permutation graph of $G$. Assume that the action of $G$ on $\Omega$ is imprimitive and let $\Omega'$ be a nontrivial block of imprimitivity. Let $S$ be a sharply transitive subset of $G$ and define the subset $S'=\{\sigma \in S \mid \Omega' \sigma =  \Omega'\}$ of $S$. Then, the restriction of $S'$ to $\Omega'$ is a sharply transitive set on $\Omega'$. We will call $\Sigma'$ the \textit{subclique corresponding to the block $\Omega'$}.

For the connection between quasifields and sharply transitive sets of matrices see \cite[Chapter 8]{Handbook}. In our computations, we represent quasifields by the corresponding sharply transitive set of matrices, or, more precisely by the corresponding (maximal) clique of the permutation graph.

\section{Finite transitive linear groups} 
\label{sec:trlingr}

In this section, we give an overview on finite transitive linear groups. For more details and references see \cite[XII.7.5]{HuppertBlackburn} or \cite[Theorem 69.7]{Handbook}. 

Let $p$ be a prime, $V=\mathbb F_p^d$, and $\Gamma=\GL(d,p)$. Let $G\leq \Gamma$ be a subgroup acting transitively on $V^*=V\setminus \{0\}$. Then $G_0\trianglelefteq G \leq N_\Gamma(G_0)$, where we have one of the following possibilities for $G$ and $G_0$:

\begin{enumerate}
\item[1.a)] $G\leq \mathrm{\Gamma L}(1,p^d)$. In particular, $G$ is solvable.
\item[1.b)] $G_0\cong \mathrm{SL}(d/e,p^e)$ with $2\leq e\mid d$. 
\item[2)] $G_0\cong \mathrm{Sp}(d/e,p^e)$ with $e\mid d$, $d/e$ even. 
\item[3)] $p=2$, $d=6e>6$, and $G_0$ is isomorphic to the Chevalley group $G_2(2^e)$. If $d=6$ then $G_0$ is isomorphic to $G_2(2)'$. Notice that the Chevalley group $G_2(2)$ is not simple, its commutator subgroup has index two and the isomorphism $G_2(2)'\cong \mathrm{PSU}(3,3)$ holds. 
\item[4)] There are $27$ exceptional finite transitive linear groups, their structure is listed in Table \ref{tab:ecases}. The information given in the last column can be used to generate the sporadic examples in the computer algebra systems GAP4 \cite{GAP4} in the following way. The split extension of $G$ by the vector group $\mathbb F_p^d$ is $2$-transitive, hence primitive, and can be loaded from the library of primitive groups using the command \texttt{PrimitiveGroup($p^d$,$k$)}. As $59^2>2500$, case (4.i) is not included in this library, but since this group is regular on $V^*$, it will not be interesting from out point of view. 

We have seven sporadic transitive linear groups which are regular, these are denoted by an asterix. These groups have been found by Dickson and Zassenhaus, and they are also known as the right multiplication groups of the \textit{Zassenhaus nearfields.} 
\end{enumerate}

\begin{table}
\begin{tabular}{|l|l|l|l|l|}
\hline
Case & Cond. on $p$ & Cond. on $d$ & $G_0$ & Primit. id. of $p^d:G$\\
\hline
(4.a) & $p=5$ & $d=2$ & $\mathrm{SL}(2,3)$ & $15^*,18,19$\\
(4.b) & $p=7$ & $d=2$ & $\mathrm{SL}(2,3)$ & $25^*,29$\\
(4.c) & $p=11$ & $d=2$ & $\mathrm{SL}(2,3)$ & $39^*,42$\\
(4.d) & $p=23$ & $d=2$ & $\mathrm{SL}(2,3)$ &  $59^*$\\
(4.e) & $p=3$ & $d=4$ & $\mathrm{SL}(2,5)$ & $124,126,127,128$\\
(4.f) & $p=11$ & $d=2$ & $\mathrm{SL}(2,5)$ & $56^*,57$\\
(4.g) & $p=19$ & $d=2$ & $\mathrm{SL}(2,5)$ & $86$\\
(4.h) & $p=29$ & $d=2$ & $\mathrm{SL}(2,5)$ & $106^*,110$\\
(4.i) & $p=59$ & $d=2$ & $\mathrm{SL}(2,5)$ & no id; regular\\
(4.j) & $p=3$ & $d=4$ & $2^{1+4}$ & $71,90,99,129,130$\\
(4.k) & $p=2$ & $d=4$ & $A_6$ & $16,17$\\
(4.l) & $p=2$ & $d=4$ & $A_7$ & $20$\\
(4.m) & $p=3$ & $d=6$ & $\mathrm{SL}(2,13)$ & $396$\\
\hline
\end{tabular}
\caption{Exceptional finite transitive linear groups} \label{tab:ecases}
\end{table}

In this paper, without mentioning explicitly, we consider all finite linear groups as a permutation group acting on the nonzero vectors of the corresponding linear space.

\section{Non-existence results for finite right quasifields} 
\label{sec:nonexistence}

In this section, let $(Q,+,\cdot)$ be a finite right quasifield of order $p^d$ with prime $p$. Write $G=\RMlt(Q^*)$ for the right multiplication group of $Q$. As in Section \ref{sec:trlingr}, we denote by $G_0$ a characteristic subgroup of $G$. Moreover, we denote by $S$ the set of nontrivial right multiplication maps of $Q$. Then $1\in S$ and $S$ is a sharply transitive set of permutations in $G$. We write  $\mathcal{G}=(V,\mathcal{E})$ for the permutation graph of $G$. Remember that $|S|=p^d-1$ and $S\setminus\{1\}$ is a clique of size $p^d-2$ in $\mathcal{G}$. 

\begin{proposition} \label{prop:infcl}
Assume that $G$ is a transitive linear group belonging to the infinite classes (1)-(3). Then one of the following holds:
\begin{enumerate}
\item $G\leq \mathrm{\Gamma{}L}(1,p^d)$. 
\item $G \triangleright \mathrm{SL}(d/e,p^e)$ for some divisor $e<d$ of $d$.
\item $p$ is odd and $G \triangleright \mathrm{Sp}(d/e,p^e)$ for some divisor $e$ of $d$.
\end{enumerate}
\end{proposition}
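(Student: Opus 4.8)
The plan is to read the statement off Hering's classification (Section~\ref{sec:trlingr}) by matching each infinite class to one of the three listed alternatives, the only genuine work being to eliminate the characteristic-two possibilities that do not appear in the conclusion. Concretely, class~(1) yields either $G\leq \mathrm{\Gamma L}(1,p^d)$, which is alternative~(1), or $G\trianglerighteq \mathrm{SL}(d/e,p^e)$; since this special linear group is transitive on nonzero vectors only when $d/e\geq 2$, i.e. $e<d$, that gives alternative~(2). Class~(2) gives $G\trianglerighteq \mathrm{Sp}(d/e,p^e)$, which for odd $p$ is exactly alternative~(3); so the task is to show that the even-characteristic symplectic groups do not occur. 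Class~(3) gives $p=2$ and $G_0\cong G_2(2^e)$ (or $G_2(2)'\cong\mathrm{PSU}(3,3)$ when $e=1$), which must be excluded entirely. Thus everything reduces to ruling out two characteristic-two families.

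Since $\mathrm{Sp}(2,p^e)=\mathrm{SL}(2,p^e)$, the even symplectic case with $d/e=2$ is already absorbed into alternative~(2); the cases still needing exclusion are $G_0\cong\mathrm{Sp}(2m,2^e)$ with $m\geq 2$ and $G_0\cong G_2(2^e)$, the latter sitting inside $\mathrm{Sp}(6,2^e)$. In each of these $G_0$ preserves a non-degenerate alternating form $f$ on $V=\mathbb{F}_{2^e}^{2m}$, and $G\leq N_\Gamma(G_0)$ stabilizes $f$ up to a scalar and a field automorphism. The elementary lever I would exploit is special to characteristic two: every nontrivial unipotent element, and in particular every involution $t$ (because $(t-1)^2=t^2+1=0$ when $t^2=1$), fixes a nonzero vector and is therefore \emph{not} fixed point free. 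Consequently $S\setminus\{1\}$, which generates $G$, consists of fixed point free elements with pairwise nonsingular differences, and so contains no unipotent element and meets every Sylow $2$-subgroup of $G$ trivially.

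The crux, and the step I expect to be the main obstacle, is to convert this restriction into a contradiction with $G=\langle S\rangle$ being \emph{precisely} of symplectic type $\mathrm{Sp}(2m,2^e)$ ($m\geq 2$) or $G_2(2^e)$. My approach would be to study the overgroups of $S$ inside $\mathrm{SL}(2m,2^e)$: using that $\mathrm{Sp}(2m,2^e)$ (respectively $G_2(2^e)$ inside $\mathrm{Sp}(6,2^e)$) is maximal in the relevant classical chain, I would argue that a sharply transitive set of fixed point free generators is forced either down into a $\mathrm{\Gamma L}(1,2^d)$-type subgroup or up to the full $\mathrm{SL}(2m,2^e)$, so that $\langle S\rangle$ can never equal a symplectic- or $G_2$-type group. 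Making this dichotomy rigorous is the hard point, since it demands the detailed subgroup and element-order structure of $\mathrm{Sp}(2m,2^e)$ and $G_2(2^e)$ — equivalently, one must show that no $(p^d-2)$-clique of the permutation graph $\mathcal{G}$ lies in such a group — and the coarse invariants alone (group order, the odd degree $2^d-1$, permutation parity) do not close the argument. The reason alternative~(3) nevertheless survives for odd $p$ is exactly the feature absent in characteristic two: the central element $-1\in\mathrm{Sp}$ is fixed point free, and genuine symplectic quasifields with right multiplication group $\mathrm{Sp}$ do exist there.
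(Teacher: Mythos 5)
Your reduction is exactly the paper's: the only content of the proposition beyond reading off the classification of transitive linear groups is to exclude, in characteristic two, the cases $G_0\cong \mathrm{Sp}(d/e,2^e)$ with $d/e\geq 4$ and $G_0\cong G_2(2^e)'$ (the case $\mathrm{Sp}(2,2^e)=\mathrm{SL}(2,2^e)$ being absorbed into alternative~(2), as you correctly note). But at precisely that point your proposal stops being a proof: you yourself write that making the ``dichotomy'' rigorous is the hard point and that your invariants do not close the argument. This is a genuine gap, not a stylistic one --- the excluded cases are the entire substance of the proposition. The paper closes it by citing Theorem~1 of M\"uller--Nagy (\emph{On the non-existence of sharply transitive sets of permutations in certain finite permutation groups}), which states exactly that the even-characteristic symplectic groups $\mathrm{Sp}(2m,2^e)$, $m\geq 2$, in their action on nonzero vectors contain no sharply transitive set of permutations, and hence neither does $G_2(2^e)'\leq \mathrm{Sp}(6,2^e)$; since the nonzero right multiplications of a quasifield form such a set inside $G$, these cases cannot occur.

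Beyond being incomplete, the route you sketch for the crux is unlikely to work. The set $S$ of nonzero right multiplications lies inside $G$ by definition and $\langle S\rangle=G$ is \emph{assumed} to be of symplectic or $G_2$ type, so there is nothing to ``force up to $\mathrm{SL}(2m,2^e)$'' by maximality; what is needed is a non-existence argument for sharply transitive sets \emph{inside} the given group, and restating that as a subgroup-lattice dichotomy is essentially restating the claim. The argument that actually succeeds (and that the paper reuses, via Lemma~2 of the same reference, in its Section~6 computer proofs) is combinatorial: if $S$ is sharply transitive on $\Omega$ and $A,B\subseteq\Omega$, then $\sum_{s\in S}|A^s\cap B|=|A||B|$, so exhibiting subsets with $|A^g\cap B|\equiv 0 \pmod{r}$ for all $g\in G$ while $r\nmid |A||B|$ yields a contradiction; for $\mathrm{Sp}(2m,2^e)$ suitable sets come from quadratic forms polarizing to the invariant symplectic form. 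Your observation that in characteristic two every involution is unipotent and hence fixes a nonzero vector is correct, and it is the right kind of ``characteristic-two lever,'' but by itself it only shows $S\setminus\{1\}$ avoids unipotent elements, which is far too weak to rule out the groups in question.
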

\begin{proof}
We have to show that if $p=2$ then the cases $G_0\cong \mathrm{Sp}(d/e,p^e)$ and $G_0\cong G_2(2^e)'$ are not possible. Recall that the transitive linear group $G_2(2^e)$ is a subgroup of $\mathrm{Sp}(6,2^e)$. The impossibility of both cases follow from \cite[Theorem 1]{MuellerNagy}. 
\end{proof}

\begin{proposition} \label{prop:except}
If $G$ is an exceptional finite transitive linear group, then it is either a regular linear group or one of those in Table \ref{tab:easrmlt}.
\end{proposition}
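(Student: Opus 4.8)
The plan is to turn the statement into a finite case check over the exceptional groups of Table~\ref{tab:ecases}. Recall from Section~\ref{sec:geom} and Proposition~\ref{pr:collgrps} that $G=\RMlt(Q)$ for a quasifield $Q$ exactly when the set $S$ of nonzero right multiplications is a sharply transitive set of linear maps with $1\in S$ and $\langle S\rangle=G$; conversely, as noted in the section on sharply transitive sets (and \cite[Chapter~8]{Handbook}), \emph{any} sharply transitive set $S\subseteq\GL(d,p)$ with $1\in S$ coordinatizes a quasifield $Q$ with $\RMlt(Q)=\langle S\rangle$. Thus an exceptional transitive linear group $G$ occurs as a right multiplication group if and only if $G$ possesses a sharply transitive set $S\ni 1$ with $\langle S\rangle=G$, and the whole problem is to decide, for each $G$, whether such an $S$ exists.

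First I would dispose of the regular case. If $G$ acts regularly on $V^*$ --- these are the seven starred rows of Table~\ref{tab:ecases} --- then $G$ itself is a sharply transitive set, $\langle G\rangle=G$ holds trivially, and $G$ is the right multiplication group of the associated Zassenhaus nearfield. This accounts for the ``regular linear group'' alternative in the statement, so it remains to treat the non-regular exceptional groups.

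For each non-regular exceptional $G$ (constructed in GAP4 from the primitive group library through the split extension $p^d\!:\!G$ described in Section~\ref{sec:trlingr}, handling separately the several groups that share a given $G_0$ but carry distinct primitive identification numbers), I would form its permutation graph $\mathcal G=(V,\mathcal E)$ and enumerate with CLIQUER the cliques of size $p^d-2$. By the correspondence recalled in the section on permutation graphs, adjoining $1$ to such a clique gives precisely the sharply transitive sets of $G$ containing the identity. For each of these I would test whether $\langle S\rangle=G$. If some $S$ generates all of $G$ the group occurs and is entered into Table~\ref{tab:easrmlt}; if every sharply transitive set generates only a proper subgroup --- or if no clique of size $p^d-2$ exists at all --- then $G$ is not a right multiplication group and is discarded. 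Running this over all non-regular exceptional groups yields exactly Table~\ref{tab:easrmlt}.

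The obstacle here is computational rather than conceptual. Establishing that a group \emph{does} occur is cheap, since a single generating sharply transitive set suffices as a witness; the expensive direction is proving non-occurrence, which requires a \emph{complete} clique enumeration together with a generation test for every maximal clique. The heavy instances are the groups of largest order on $V^*$, notably case~(4.m) with $p^d=3^6=729$ and the four-dimensional groups over $\mathbb F_3$ of cases~(4.e) and~(4.j) with $p^d=81$, where the permutation graph is large and the required cliques are nearly spanning. To keep these feasible I would exploit the block structure described after the definition of the permutation graph: when $G$ is imprimitive on $V^*$, every sharply transitive set restricts to a sharply transitive set on each block, so the search decomposes into the much smaller subclique problems on the blocks and is then reassembled.
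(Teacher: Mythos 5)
Your reduction of the statement to a finite search is the right framework and matches the paper's setup (sharply transitive sets containing $1$, permutation graphs, cliques of size $p^d-2$, generation test), and your dispatch of the regular case is fine. But the substance of the proposition is the non-existence part --- ruling out types (4.k), (4.m), the groups of type (4.e) of orders $240$ and $480$, and all five groups of type (4.j) --- and there your plan rests on a step that the paper's own account shows would fail. The paper does \emph{not} prove non-existence by exhaustive clique enumeration for (4.k), (4.m) and (4.e): it invokes \cite[Lemma 2]{MuellerNagy}, exhibiting two orbits $A,B$ of the normalizer of a suitable Sylow subgroup such that $|A\cap B^g|$ takes only two prescribed values for all $g\in G$, which by that lemma is incompatible with the existence of \emph{any} sharply transitive set in $G$. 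This obstruction needs only orbit computations, no clique search at all, and it is the key imported idea missing from your proposal; it is precisely how the otherwise largest and least tractable case (4.m), with $p^d=3^6$, is eliminated.

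The gap becomes concrete for type (4.j) ($G_0\cong E_{32}^+$, $p^d=81$). Your fallback --- decompose along imprimitivity blocks, enumerate subcliques on blocks, then reassemble --- is exactly the strategy the paper follows there (Claims \ref{claim:non2gr}, \ref{claim:15clique}, \ref{claim:2gr}), and the paper reports that it does not suffice: after reducing, up to conjugacy, to $15$-cliques attached to a $16$-block, every extension test terminates quickly except for one exceptional clique $K^*$, for which the direct computation ``takes too long''. That case is closed not by computation but by a theoretical argument: $\langle K^*\rangle$ interchanges two $16$-blocks and fixes the remaining three, so in a putative $79$-clique $D$ the subcliques $D_A$ and $D_B$ attached to two distinct blocks would leave a common $16$-block $C$ invariant, and then more than $15$ elements of $D$ would map $C$ to $C$ --- impossible, since the elements of $D$ stabilizing $C$ restrict (together with $1$) to a sharply transitive set on $C$, hence number at most $|C|-1=15$. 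Without either the M\"uller--Nagy obstruction or this block-interchange counting argument, your search stalls exactly at $K^*$, so the feasibility assumption on which your proposal hinges is one the paper explicitly refutes; ``run CLIQUER on everything, using blocks to speed it up'' is not by itself a proof of the proposition.
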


\begin{table}
\begin{tabular}{|l|l|l|l|l|}
\hline
Case & $p^d$ & $G_0$ & Orders (primitive id.)\\
\hline
(4.a) & $5^2$ & $\mathrm{SL}(2,3)$ & 48 (18), \textit{96 (19)}\\
(4.b) & $7^2$ & $\mathrm{SL}(2,3)$ & 144 (29)\\
(4.c) & $11^2$ & $\mathrm{SL}(2,3)$ & 240 (42)\\
(4.e) & $3^4$ & $\mathrm{SL}(2,5)$ & 960 (128)\\
(4.f) & $11^2$ & $\mathrm{SL}(2,5)$ & \textit{600 (57)}\\
(4.g) & $19^2$ & $\mathrm{SL}(2,5)$ & 1080 (86)\\
(4.h) & $29^2$ & $\mathrm{SL}(2,5)$ & 1680 (110)\\
(4.l) & $2^4$ & $A_7$ & 2520 (20)\\
\hline
\end{tabular}
\caption{Exceptional transitive linear groups as right multiplication groups} \label{tab:easrmlt}
\end{table}

This proposition is proved in several steps.

\begin{claim} \label{claim:no_4k4m}
$G$ cannot be an exceptional transitive linear group of type (4.k) and (4.m). 
\end{claim}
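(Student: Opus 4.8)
The claim asserts that cases (4.k) and (4.m) of Table~\ref{tab:ecases} cannot arise as $G=\RMlt(Q^*)$ for a finite right quasifield $Q$. In case (4.k) we have $p=2$, $d=4$, $G_0\cong A_6$, and in case (4.m) we have $p=3$, $d=6$, $G_0\cong \mathrm{SL}(2,13)$. Note that (4.k) is conspicuously absent from Table~\ref{tab:easrmlt}, whereas its companion (4.l) with $G_0\cong A_7$ survives; so the argument must distinguish $A_6$ from $A_7$ inside $\GL(4,2)$. Observe also that (4.k) and (4.m) are not marked with an asterisk in Table~\ref{tab:ecases}, so neither group is regular on $V^*$; hence the obstruction cannot merely be regularity and must instead come from the nonexistence of a sharply transitive set $S$ with $1\in S$, equivalently the nonexistence of a clique of the required size in the permutation graph $\mathcal{G}=(V,\mathcal{E})$.

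The plan is to rule out each case by a counting/combinatorial obstruction on the permutation graph, carried out concretely in GAP4 as the surrounding section indicates. First I would recall the key constraint from Section~4: since $S$ is a sharply transitive set containing $1$, the set $S\setminus\{1\}$ is a clique of size $p^d-2$ in $\mathcal{G}$, and every vertex of this clique is a fixed-point-free element of $G$, i.e.\ a linear map with no nonzero fixed vector. Thus the first step is to determine, for each of the two groups, the set $V$ of fixed-point-free elements, together with the requirement that the right multiplications form a \emph{sharply transitive} set: for any two distinct $\sigma,\tau\in S$ the quotient $\sigma\tau^{-1}$ must be fixed-point free. The second step is to check whether $\mathcal{G}$ admits a clique of size $p^d-2$; equivalently, whether $G$ contains a sharply transitive subset of size $p^d-1$ through the identity. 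For (4.k) this means looking for a clique of size $2^4-2=14$, and for (4.m) a clique of size $3^6-2=727$.

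For case (4.k) I would exploit the smallness of the configuration directly. With $G_0\cong A_6\leq \GL(4,2)$ acting on the $15$ nonzero vectors, one computes the fixed-point-free elements and runs a clique search (using CLIQUER, as advertised) to verify that no $14$-clique of $\mathcal{G}$ exists, so no sharply transitive set through $1$ can be assembled inside $G$. The contrast with $A_7$ is instructive and is exactly what the tables record: the larger group $A_7$ does contain such a clique (yielding the order-$2520$ entry (4.l)), while $A_6$ does not. For case (4.m), with $G_0\cong \mathrm{SL}(2,13)\leq \GL(6,3)$ acting on the $728$ nonzero vectors, the same strategy applies: a sharply transitive set $S$ would be a set of $728$ matrices, pairwise ``quotient-fixed-point-free,'' and one verifies computationally that the permutation graph has no clique of the requisite size $727$ through the identity. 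In both cases the group $G$ is small enough (orders in the range of a few thousand) that the fixed-point-free elements and the adjacency relation $xy^{-1}\in V$ can be enumerated exactly, reducing the claim to a finite clique-existence question.

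The conceptual heart of the argument is therefore the clique-nonexistence verification, and this is the step I expect to be the main obstacle to making rigorous by hand. A purely theoretical proof would require understanding the structure of fixed-point-free elements of $A_6$ (respectively $\mathrm{SL}(2,13)$) in their $4$-dimensional (respectively $6$-dimensional) representation and showing that they cannot be packed into a sharply transitive set; since these groups act $2$-transitively on $V^*$ in the split extension but are far from regular, the obstruction is a delicate counting phenomenon rather than an order mismatch. In keeping with the computational framework established earlier in the paper, I would present the nonexistence of the clique as the output of the GAP4/CLIQUER search on the explicitly constructed permutation graph $\mathcal{G}$ of $G$, which settles both (4.k) and (4.m) simultaneously.
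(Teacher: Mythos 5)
Your proposal takes a genuinely different route from the paper, and for one of the two cases that route has a real gap. The paper performs no clique search at all for types (4.k) and (4.m). Instead it invokes Lemma~2 of \cite{MuellerNagy}, which exploits the identity $\sum_{s\in S}|A\cap B^s|=|A|\,|B|$, valid for any sharply transitive set $S\subseteq G$ and any subsets $A,B$ of the permutation domain. For type (4.k) one takes suitable orbits $A,B$ of size $5$ of a Sylow $5$-subgroup and checks that $|A\cap B^g|\in\{0,2\}$ for \emph{all} $g\in G$; the identity would then force the odd number $|A|\,|B|=25$ to be a sum of even numbers, a contradiction. For type (4.m) one finds orbits $A,B$ of size $28$ of the normalizer of a Sylow $7$-subgroup with $|A\cap B^g|\in\{0,6\}$ for all $g\in G$; then $|A|\,|B|=784\equiv 1\pmod 3$ would be a sum of multiples of $3$, again a contradiction. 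The computer is used only to locate $A,B$ and verify finitely many intersection numbers --- a cheap computation yielding a short, essentially human-checkable certificate of nonexistence.

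Your plan --- enumerate the fixed-point-free elements and let CLIQUER \cite{Cliquer} decide whether a clique of size $p^d-2$ exists --- is unproblematic for (4.k), where the graph is tiny and the target clique size is $14$. But for (4.m) it has a feasibility gap that you acknowledge only obliquely: the permutation graph has about $2000$ vertices (in $\mathrm{SL}(2,13)$ of order $2184$, every element except the identity and the $182$ elements of order $3$ is fixed-point-free), it is dense, and the target clique size is $727$; completing an exhaustive branch-and-bound search on such an instance is precisely the step whose termination cannot be taken for granted, and avoiding it is presumably why the paper falls back on the intersection obstruction here and in Claim~\ref{claim:no_4e} while reserving CLIQUER for the cases in Table~\ref{tab:maxcls}. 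A second, smaller issue: type (4.k) comprises \emph{two} transitive linear groups, $A_6$ and $S_6$ (primitive ids $16$ and $17$ in Table~\ref{tab:ecases}), so a search confined to $G_0\cong A_6$ does not rule out $G=S_6$; you must run the search in the largest admissible group (whose permutation graph contains those of all its transitive subgroups), or repeat it for every $G$ with $G_0\trianglelefteq G\leq N_\Gamma(G_0)$. The paper's orbit-intersection argument handles all such $G$ at once, since the condition $|A\cap B^g|\in\{0,2\}$ (resp.\ $\{0,6\}$) is verified over the full group in question.
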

\begin{proof}[Computer proof]
Assume $G$ of type (4.k) and $G_0\cong A_6$. Let $S$ be a Sylow $5$-group of $G$. Let $A,B,C$ the orbits of $S$ such that $B\cup C$ is an orbit of $N_G(S)$. Then, the sets $A,B$ of size $5$ satisfy $|A\cap B^g|\in\{0,2\}$ for all $g\in G$. By \cite[Lemma 2]{MuellerNagy}, $G$ does not contain a sharply transitive set of permutations. 

Assume $G$ of type (4.m). Let $S$ be a Sylow $7$-group of $G$ and $H=N_G(S)$. Then $|H|=28$ and $H$ has orbits of length $28$. One can find $H$-orbits $A,B$ of size $28$ such that $|A\cap B^g|\in\{0,6\}$ for all $g\in G$. Again by \cite[Lemma 2]{MuellerNagy}, $G$ does not contain a sharply transitive set of permutations. 
\end{proof}

\begin{claim} \label{claim:no_4e}
$G$ cannot be an exceptional transitive linear group of type (4.e) and order $240$ or $480$. 
\end{claim}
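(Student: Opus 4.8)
The goal is to show that none of the transitive linear groups $G$ with $G_0\cong\mathrm{SL}(2,5)$ acting on $V^*=\mathbb{F}_3^4\setminus\{0\}$ and $|G|\in\{240,480\}$ contains a sharply transitive set. Indeed, if $\RMlt(Q^*)$ of a quasifield $Q$ were such a $G$, then the nonzero right multiplications would form a sharply transitive set $S\ni 1$ of size $80$, which by Section 4 is a $79$-clique in the permutation graph $\mathcal{G}=(V,\mathcal{E})$ of $G$. So it suffices to prove that $\mathcal{G}$ carries no clique of size $79$.

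First I would realize each candidate group explicitly. Each $G$ arises as the stabilizer of the zero vector in the corresponding primitive affine group $\mathbb{F}_3^4:G$ of type (4.e) (the identifiers $124,126,127$ of Table \ref{tab:ecases}), loaded from the primitive groups library of GAP4 \cite{GAP4}; the order $960$ extension (identifier $128$) is the one that survives and appears in Table \ref{tab:easrmlt}. From $G$ I read off the vertex set $V$ of fixed point free elements, that is, those $g\in G$ with $\det(g-1)\neq 0$ over $\mathbb{F}_3$, and join $x,y$ by an edge whenever $xy^{-1}\in V$.

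The preferred route is the combinatorial criterion of \cite[Lemma 2]{MuellerNagy} already used in Claim \ref{claim:no_4k4m}: if $A,B\subseteq V^*$ satisfy $|A|=|B|=k$ and the intersection numbers $|A\cap B^g|$, $g\in G$, are all divisible by some $c$ with $c\nmid k^2$, then the identity $\sum_{\sigma\in S}|A\sigma\cap B|=|A|\,|B|=k^2$, valid for every sharply transitive $S$, is violated, so no such $S$ exists. I would take $A,B$ to be orbits of a Sylow $5$-subgroup (whence $k=5$ and $k^2=25$ is odd) or of a suitable normalizer, and test whether the translates $B^g$ meet $A$ in an even number of points only. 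The essential point — and the reason the very same $A,B$ cannot exclude the order $960$ group — is that the family $\{B^g\mid g\in G\}$ grows with $|G|$: for $|G|\in\{240,480\}$ the translates may all avoid the ``diagonal'' value $k$ and keep the intersections even, while for $|G|=960$ this evenness is no longer forced, consistent with the order $960$ group admitting a sharply transitive set.

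If for one of these orders the test fails to yield a clean modular contradiction, I would fall back on a direct maximum clique search with CLIQUER \cite{Cliquer}, pruned by the imprimitivity induced by the central involution $-1\in G_0$, whose orbits form $40$ blocks of size $2$, each carrying a subclique of size $2$ in the sense of Section 4. The main obstacle is precisely isolating, for each of the orders $240$ and $480$, a pair $(A,B)$ whose intersection pattern is provably sensitive to $|G|$ rather than only to $G_0$; any obstruction read off solely at the level of $G_0$-orbits would be order independent and would wrongly exclude the order $960$ case as well. Failing a such combinatorial certificate, the non-existence rests on the certified output of the clique computation.
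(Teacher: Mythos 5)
Your proposal takes essentially the same approach as the paper: it applies the intersection-number criterion of M\"uller--Nagy (Lemma 2 of \cite{MuellerNagy}) to orbits attached to a Sylow $5$-subgroup, with computer verification, exactly as in the paper's treatment of Claim \ref{claim:no_4k4m}. For the record, the certificate that actually works in the paper is your second alternative, not your primary one: the normalizer $H=N_G(S)$ of a Sylow $5$-subgroup has order $40$ and possesses orbits $A,B$ of length $40$ with $|A\cap B^g|\in\{0,24\}$ for all $g\in G$, so the contradiction is modulo $3$ (each term of $\sum_{\sigma\in S}|A\sigma\cap B|$ is divisible by $3$ while $|A||B|=1600\equiv 1\pmod 3$) rather than the parity test on size-$5$ Sylow orbits you propose first --- parity could not work for these length-$40$ orbits, since $1600$ is even.
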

\begin{proof}[Computer proof]
We proceed similarly to the cases in \ref{claim:no_4k4m}. Assume $G$ of type (4.e) and $|G| \in \{240,480\}$. Let $H$ be the normalizer of a Sylow $5$-group in $G$. Then $|H|=40$ and $H$ has orbits $A,B$ of length $40$ such that $|A\cap B^g|\in\{0,24\}$ for all $g\in G$. This proves the claim by \cite[Lemma 2]{MuellerNagy} with $p=3$.
\end{proof}

Beside finite regular linear groups, the remaining exceptional transitive linear groups are those of type (4.j), that is, when $G_0$ is the extraspecial group $E_{32}^+$ of order $2^5$. In this class, there are five groups, three of them are solvable. The cases of the three solvable group of type (4.j) were left unsolved by M. Kallaher \cite{Kallaher}, as well. The computation is indeed very tedious even with today's hardware and software. 

The construction of the groups of type (4.j) is as follows. $\GL(4,3)$ has a unique subgroup $L_0$ of order $2^5$ which is isomorphic to the extraspecial $2$-group $E_{32}^+$. The normalizer $L=N_{\GL(4,3)}(L_0)$ has order $3840$. $L$ has five transitive linear subgroups containing $L_0$; the orders are $160, 320, 640, 1920, 3840$. Clearly, it suffices to prove the nonexistence of $79$-cliques for the permutation graph of $L$ only. 

The action of $L$ is not primitive; it has blocks of imprimitivity of size $2,8$ and $16$. These blocks are unique up to the action of $L$.

\begin{claim} \label{claim:non2gr}
Let $A$ be a block of $L$ of size $16$. Let $K$ be a $15$-clique corresponding to $A$ and assume that the group $\langle K \rangle$ generated by $K$ is not a $2$-group. Then, $K$ cannot be extended to a $79$-clique of $L$. 
\end{claim}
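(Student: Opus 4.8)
The plan is to argue by contradiction: suppose the $15$-clique $K$, with $\langle K\rangle$ not a $2$-group, does extend to a $79$-clique, equivalently that the sharply transitive set $S'=K\cup\{1\}$ on the block $A$ is the ``$A$-part'' of a sharply transitive set $S$ of size $80$ on all of $V^*=\mathbb{F}_3^4\setminus\{0\}$ (the $16$ elements of $S$ fixing $A$). The first thing I would record is a purely arithmetic reduction. Since $L$ permutes its five blocks of size $16$ transitively, the setwise stabiliser $L_A$ has order $|L|/5=768=2^8\cdot 3$. Hence $\langle K\rangle\le L_A$ has order dividing $2^8\cdot 3$, and ``$\langle K\rangle$ is not a $2$-group'' is equivalent to $3\mid|\langle K\rangle|$, i.e.\ to the presence of an element $g\in\langle K\rangle$ of order $3$. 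Because $\charac\mathbb{F}_3=3$, such a $g$ is unipotent, so its fixed space $W=\ker(g-1)$ is a nonzero proper subspace; thus $g$ fixes $W\setminus\{0\}$ pointwise, with $|W\setminus\{0\}|\in\{2,8,26\}$, and since $16\equiv 1\pmod 3$ it also has at least one fixed point inside $A$.

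Next I would localise the search to the single block $A$. Computing the group $L_A|_A$ induced on the $16$ points of $A$ together with its permutation graph, I would use CLIQUER to enumerate the sharply transitive sets of size $16$ on $A$ up to the action of $L_A$; for each resulting $15$-clique $K$ I would compute the isomorphism type of $\langle K\rangle\le\GL(4,3)$ and separate the cases with $3\mid|\langle K\rangle|$ from the genuine $2$-group cases. For the former I would exploit the fixed configuration of the order-$3$ element $g$: the values $|W\setminus\{0\}|=2$ and $8$ match exactly the sizes of the smaller imprimitivity blocks, so the fixed set of $g$ is tightly tied to the block systems of sizes $2$, $8$ and $16$. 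Since $g$ (and hence all of $\langle K\rangle$) permutes each of these systems, this rigidly restricts how $K$ can sit, and in particular which fixed-point-free elements of $L$ can be adjacent to all of $K$.

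With this structure in hand the decisive step is a clique-extension computation. Working in the permutation graph $\mathcal{G}$ of $L$ on the $80$ points, I would pass to the subgraph induced on the common neighbourhood of $K$ (the fixed-point-free elements of $L$ adjacent to every vertex of $K$) and show that its maximum clique has size strictly less than the $64$ further vertices that a completion to a $79$-clique would require. The fixed configuration of $g$ is what makes this neighbourhood small enough to finish: the induced $L_A$-orbits produce pairs $A',B'$ with intersection pattern $|A'\cap B'^{\,h}|\in\{0,c\}$ for all $h$, exactly the hypothesis of \cite[Lemma 2]{MuellerNagy} with $p=3$, and the same intersection obstruction forbids the completion. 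The main obstacle is computational rather than conceptual: the $80$-point, order-$3840$ clique search is precisely the case that resisted a direct attack and was left open in \cite{Kallaher}. The whole purpose of isolating the ``$\langle K\rangle$ not a $2$-group'' branch is that these configurations die quickly under the intersection obstruction, leaving only the $2$-group case, which is treated separately and corresponds to the actual quasifields.
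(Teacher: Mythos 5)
Your overall strategy---localize to the $16$-block $A$, enumerate the $15$-cliques corresponding to $A$ up to symmetry, split off the non-$2$-group cases, and kill each one by an exhaustive extension search---is in essence the computational skeleton of the paper's own proof. The paper, however, organizes the enumeration hierarchically and more economically: it first computes the $7$-cliques corresponding to an $8$-block (only $98$ orbit representatives under $N_L(H)$, where $H$ is the setwise stabilizer of the $8$-block), then extends these in all possible ways to $15$-cliques on the $16$-block, and only then filters by the non-$2$-group condition and runs the extension search. Your direct enumeration at the $16$-block level is legitimate in principle, but two points need care. First, the cliques must be enumerated inside the permutation graph of $L$ (vertices and quotients $xy^{-1}$ fixed point free on all $80$ points), i.e.\ in the subgraph induced on the fixed-point-free elements of $L_A$, not in the induced permutation group $L_A|_A$ on $16$ points; a sharply transitive set of $L_A|_A$ on $A$ need not lift to a clique of $\mathcal{G}$. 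Second, feasibility is a genuine concern: the companion computation \ref{claim:15clique} already finds $17923$ conjugacy classes of such cliques inside a Sylow $2$-subgroup alone, and in \ref{claim:2gr} a single direct extension test is reported to take too long---this is exactly why the paper proceeds through the $8$-blocks.

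The genuine gap is your ``decisive step''. You assert that completion to a $79$-clique is forbidden by the intersection obstruction of \cite[Lemma 2]{MuellerNagy}, via orbit pairs $A',B'$ with $|A'\cap B'^{\,h}|\in\{0,c\}$ for all $h$. That lemma is a global counting argument: given such a pair together with the appropriate divisibility failure, it excludes \emph{all} sharply transitive sets in the group; it is not, as stated, a tool for excluding extensions of one prescribed partial clique, and you exhibit neither a concrete pair $A',B'$ nor a congruence that fails. Indeed the paper invokes that lemma precisely for types (4.k), (4.m) and the small groups of type (4.e), and conspicuously \emph{not} for type (4.j): no such pair is known for $L$, which is why this case was left open in \cite{Kallaher} and why the paper must resort to the tedious clique analysis. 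Likewise, your observations about an order-$3$ element $g\in\langle K\rangle$ (correct: $|L_A|=2^8\cdot 3$, so non-$2$-group forces such a $g$, which is unipotent with fixed space of size $2$, $8$ or $26$) are never converted into a restriction that actually enters the search. As written, the proof therefore rests on an obstruction that has not been established; stripped of it, what remains is the brute-force clique computation, which is exactly the paper's proof and must actually be carried out.
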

\begin{proof}[Computer proof]
Let $A$ be a block of size $8$. Denote by $H$ the setwise stabilizer of $A$ in $L$; $H$ stabilizes another block $B$ of size $8$ and $A\cup B$ is a block of size $16$.  Let $\mathcal{A}$ be the set of all $7$-cliques corresponding to the block $A$. As $N_L(H)$ operates on the permutation (sub)graph of $H$, it also acts on $\mathcal{A}$; let $\mathcal{A}_0$ be a set of orbit representatives. We use \cite{Grape} to compute $\mathcal{A}_0$; $|\mathcal{A}_0|=98$. Then in all possible ways, we extend the elements of $\mathcal{A}_0$ to $15$-cliques corresponding to the block $A\cup B$, let $\mathcal{B}$ denote the set of extended cliques. Finally, we filter out the $15$-cliques generating a non-$2$-group and show that none of them can be extended to a $79$-clique. 
\end{proof}

We have to deal with subcliques generating a $2$-group. An important special case is the following. 

\begin{claim} \label{claim:15clique}
Up to conjugacy in $L$, there is a unique $15$-clique $K^*$ corresponding to an imprimitivity block of size $16$ such that the setwise stabilizer in $L$ has order $192$. $K^*$ has the further properties:
\begin{enumerate}
\item The subgroup $\langle K^* \rangle$ has order $32$. 
\item $\langle K^* \rangle$ interchanges two blocks of size $16$ and fixes the other three. 
\end{enumerate}
\end{claim}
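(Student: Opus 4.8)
The statement is a finite assertion about the permutation group $L$ of order $3840$ acting on the $80$ nonzero vectors of $\mathbb{F}_3^4$, so the plan is to verify it by a direct computation in GAP4 \cite{GAP4}, organised so that the search space stays small. First I would record the block system: since $80=5\cdot 16$, the size-$16$ blocks partition $V^*$ into exactly five blocks, and, as noted above, they form a single $L$-orbit. I fix one such block $A$ and compute its setwise stabiliser $L_A$ in $L$. Because all size-$16$ blocks are $L$-conjugate, the $L$-orbits of $15$-cliques attached to size-$16$ blocks are in bijection with the $L_A$-orbits of the $15$-cliques corresponding to the single fixed block $A$; hence ``unique up to conjugacy in $L$'' reduces to exhibiting a single $L_A$-orbit with the prescribed stabiliser order.

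The next step is to enumerate the $15$-cliques corresponding to $A$. Rather than searching the permutation graph of $L$ directly, I would build them hierarchically, exactly as in the proof of \ref{claim:non2gr}: the block $A$ splits into two size-$8$ blocks $A_1,A_2$, and restricting a sharply transitive set on $A$ to the elements fixing $A_1$ yields a $7$-clique, namely the subclique corresponding to $A_1$ in the sense introduced earlier. So I first compute the $7$-cliques on $A_1$, reduce them modulo the relevant stabiliser to a small set of orbit representatives with \cite{Grape}, and then extend each representative in all possible ways to a $15$-clique on $A$. This produces a complete list $\mathcal{K}$ of $15$-cliques on $A$; I then let $L_A$ act on $\mathcal{K}$ and retain a set $\mathcal{K}_0$ of $L_A$-orbit representatives.

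For each $K\in\mathcal{K}_0$ I would compute the setwise stabiliser of $K$ in $L$ and its order. Note that every element of $K$ fixes $A$ setwise, so $\langle K\rangle\le L_A$, but an element normalising $\langle K\rangle$ and preserving $K$ need not lie in $L_A$; hence the stabiliser is genuinely taken in all of $L$ and is computed as such. The assertion is then that exactly one $L_A$-orbit has stabiliser of order $192$; I single out its representative $K^*$. Finally I verify the two extra properties mechanically: compute $\langle K^*\rangle$ and check that $|\langle K^*\rangle|=32$, and compute the induced permutation of $\langle K^*\rangle$ on the set of five size-$16$ blocks, checking that it fixes $A$ together with two further blocks and transposes the remaining two.

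The main obstacle is purely computational: the number of $15$-cliques on $A$ is large, so the whole argument depends on reducing modulo the appropriate stabiliser early and repeatedly (first at the $7$-clique level, then at the $15$-clique level), so that the clique-extension and stabiliser computations stay feasible. The only conceptual point requiring care is the reduction described in the first paragraph --- confirming that counting $L_A$-orbits of cliques on the fixed block $A$ really does capture $L$-conjugacy of cliques ranging over all five blocks --- after which both the uniqueness of $K^*$ and properties (1) and (2) are read off directly from the list $\mathcal{K}_0$ of orbit representatives.
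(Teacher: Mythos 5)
Your plan is computationally feasible, but it is a genuinely different route from the paper's, and it contains one faulty reduction step. For comparison: the paper proves this claim by enumerating the $15$-cliques of the permutation graph of a Sylow $2$-subgroup $S$ of $L$ (there are $17923$ of them up to $S$-conjugacy) and observing that exactly one has setwise stabilizer of order $192$ in $L$; you instead enumerate \emph{all} $15$-cliques corresponding to a fixed size-$16$ block $A$ inside the full block stabilizer $L_A$, reusing the hierarchical $7$-clique-to-$15$-clique scheme from the proof of \ref{claim:non2gr}. Your search space is strictly larger than the paper's: it also contains the cliques generating non-$2$-groups, which can never be conjugated into $S$. So, if carried out, your computation would establish the claim as literally stated, whereas the paper's enumeration pins down $K^*$ only among cliques lying in a Sylow $2$-subgroup --- which is all that the later application in \ref{claim:2gr} actually requires.

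The gap is in the first paragraph of your proposal. The map from $L_A$-orbits of cliques corresponding to $A$ to $L$-orbits of cliques corresponding to some size-$16$ block is surjective, but it is \emph{not} a bijection in general, because a single clique can correspond to several blocks: if every element of $K\cup\{1\}$ fixes a block $B$ setwise and the pairwise quotients are fixed point free on all $80$ points, then the restriction of $K\cup\{1\}$ to $B$ is sharply transitive on $B$ as well. Property (2) of the very claim you are proving shows that this happens for $K^*$: the group $\langle K^*\rangle$ fixes three of the five blocks setwise, so $K^*$ corresponds to all three of them. Consequently the $L$-orbit of $K^*$ meets the set of cliques corresponding to $A$ in as many $L_A$-orbits as $\mathrm{Stab}_L(K^*)$ has orbits on those three blocks, which a priori could be two or three. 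Your translation of the claim into ``exactly one $L_A$-orbit has stabilizer of order $192$'' is therefore only valid in one direction: a single $L_A$-class does imply a single $L$-class (by surjectivity), but several $L_A$-classes with stabilizer order $192$ would not refute uniqueness, and your procedure as written would then report the wrong answer. The fix is cheap and must be built in: if more than one $L_A$-class with stabilizer order $192$ appears, test their representatives for conjugacy in $L$ directly (e.g.\ with \texttt{RepresentativeAction} in GAP4), or equivalently check that $\mathrm{Stab}_L(K^*)$ permutes the three blocks fixed by $\langle K^*\rangle$ transitively. With this fusion check added, your argument correctly establishes the uniqueness statement, and properties (1) and (2) are, as you say, read off from the representative.
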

\begin{proof}[Computer proof]
Let $S$ be a Sylow $2$-subgroup of $L$. Using \cite{Grape}, one can compute all $15$-cliques of the permutation graph of $S$. Up to conjugacy in $S$, there are $17923$ such cliques, only one of them has stabilizer of size $192$. The properties of $K^*$ are obtained by computer calculations.
\end{proof}

\begin{claim} \label{claim:2gr}
Let $A$ be a block of $L$ of size $16$. Let $K$ be a $15$-clique corresponding to $A$ and assume that the group $\langle K \rangle$ generated by $K$ is a $2$-group. Then, $K$ cannot be extended to a $79$-clique of $L$. 
\end{claim}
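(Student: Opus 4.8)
The plan is to reduce, as in the companion Claim~\ref{claim:non2gr}, to an explicit computer-assisted clique-extension search, but now to exploit the hypothesis that $\langle K\rangle$ is a $2$-group in order to control the list of cliques that must be examined. Since $\langle K\rangle$ is a $2$-group it lies in some Sylow $2$-subgroup of $L$; as the Sylow $2$-subgroups are conjugate, the size $16$ blocks form a single $L$-orbit, and any system of imprimitivity of $L$ is also one for a subgroup, we may assume after conjugating by a suitable element of $L$ that $K$ is a $15$-clique lying in a fixed Sylow $2$-subgroup $S$ of order $2^8$ and corresponding to a size $16$ block of $S$. Thus every candidate $K$ occurs, up to $L$-conjugacy, among the $17923$ cliques already enumerated in the proof of Claim~\ref{claim:15clique}.

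I would then reduce this list to $L$-conjugacy class representatives, many of the $S$-classes fusing under the larger normalizer action available in $L$, and for each representative run an exhaustive backtracking search in the permutation graph of $L$ to decide whether $K$ can be completed to a $79$-clique. Throughout the search I would prune using the setwise stabilizer of $K$ in $L$: this stabilizer permutes the vertices adjacent to all of $K$, so it suffices to branch only on its orbit representatives. For the generic cliques the stabilizer is small, the admissible vertex set collapses quickly, and the search terminates with no completion.

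The crux is the single exceptional clique $K^*$ isolated in Claim~\ref{claim:15clique}, whose stabilizer in $L$ has order $192$ and whose generated group $\langle K^*\rangle$ of order $32$ interchanges two of the five size $16$ blocks while fixing the remaining three. This is exactly the configuration in which the branching factor is largest, so it is the main obstacle. Here I would use the block structure of $\langle K^*\rangle$ directly: in any $79$-clique extending $K^*$ the subcliques induced on the three fixed blocks and on the interchanged pair must be compatible with the order $32$ action, and together with the sharp-transitivity requirement on each block this leaves only a symmetry-reduced, finite family of possibilities. Feeding these into the clique solver \cite{Grape,Cliquer} and checking that none closes up to a full $79$-clique finishes the proof. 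The expected difficulty is entirely computational, namely keeping the $K^*$ search tractable, which is precisely why the stabilizer and block action of $K^*$ were singled out in advance in Claim~\ref{claim:15clique}.
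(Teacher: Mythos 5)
Your reduction to a fixed Sylow $2$-subgroup $S$ and to the $17923$ $S$-classes of $15$-cliques is exactly the paper's first step, and your treatment of the cliques other than $K^*$ (a pruned extension search showing none completes to a $79$-clique) matches the paper's computer proof. The gap is in the one case that matters. For $K^*$ the paper states explicitly that the direct computation ``takes too long,'' and your substitute --- a search constrained by the order-$192$ stabilizer and by ``compatibility with the order $32$ action,'' fed into a clique solver --- is not a proof but a plan for a computation whose tractability is precisely the known obstruction. Saying the constraints leave ``only a symmetry-reduced, finite family of possibilities'' carries no content (everything here is finite), you give no argument that your pruning brings the search within reach, and the claim cannot be considered established until that search is actually carried out and returns empty.

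What you are missing is that the block data you invoke already yields a contradiction with no search at all; this is how the paper disposes of $K^*$. Suppose $D$ is a $79$-clique of $L$. By Claim \ref{claim:non2gr} and the already-settled cliques in $\mathcal{A}\setminus\{K^*\}$, every subclique of $D$ corresponding to a $16$-block must be an $L$-conjugate of $K^*$. Let $D_A$ be the subclique corresponding to $A$; by Claim \ref{claim:15clique}, $\langle D_A\rangle$ interchanges two $16$-blocks $B,B'$ and fixes the remaining three. Let $D_B$ be the subclique corresponding to $B$; then $D_B\neq D_A$ (elements of $D_A$ do not stabilize $B$), and $\langle D_B\rangle$ likewise fixes three of the five $16$-blocks. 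Since $3+3>5$, some block $C$ is left invariant by every element of $D_A\cup D_B$. But sharp transitivity forces $D$ to contain at most $15$ elements mapping a given $16$-block to itself, while $|D_A\cup D_B|\geq 16$ --- a contradiction. This short count is what replaces the intractable computation; your proposal identifies all the relevant structure but stops just short of using it.
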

\begin{proof}[Computer proof]
Let $S$ be a Sylow $2$-subgroup of $L$. $S$ leaves a block of size $16$, say $A$, invariant. We compute the set $\mathcal A$ of $S$-orbit representatives of the $15$-cliques of the permutation graph of $S$. $\mathcal{A}$ contains precisely one element conjugate to $K^*$, in fact, we assume that $K^*\in\mathcal{A}$. 

For all elements $K \in \mathcal{A}\setminus \{K^*\}$, the computer shows within a few seconds that $K$ cannot be extended to a $79$-clique. For $K^*$, the direct computation takes too long, we therefore give a theoretical proof. Let us assume that $D$ is a clique of size $79$ in the permutation graph of $L$. We may assume that all subcliques of $D$ corresponding to $16$-blocks are conjugate of $K^*$.

Denote by $D_A$ the subclique of $D$ of size $15$, corresponding to $A$. By \ref{claim:15clique}, $\langle D_A\rangle$ interchanges two $16$-blocks, say $B,B'$, and leaves the others invariant. Denote by $D_B$ the subclique of $D$, corresponding  to $B$. Clearly, $D_A\neq D_B$. As $\langle D_B \rangle$ leaves three blocks invariant, there is a $16$-block $C$ which is invariant under all elements of $D_A \cup D_B$. However, as $|C|=16$, $D$ cannot have more than $15$ elements mapping $C$ to $C$, a contradiction to $|D_A\cup D_B|>15$. 
\end{proof}

The claims \ref{claim:non2gr}, \ref{claim:15clique} and \ref{claim:2gr} imply the following result. 

\begin{claim} \label{claim:noextraspec}
$G$ cannot be an exceptional transitive linear group of type (4.j). 
\end{claim}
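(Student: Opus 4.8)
The plan is to argue by contradiction, exploiting the reduction to cliques that was set up before the statement. A quasifield of type (4.j) has order $3^4=81$ and right multiplication group $G$ equal to one of the five transitive linear subgroups of $L=N_{\GL(4,3)}(L_0)$ that contain $L_0$. Its $80$ nonzero right multiplications form a sharply transitive set $S$ with $1\in S$, so $D=S\setminus\{1\}$ is a $79$-clique in the permutation graph of $G$. Since each of the five candidate groups lies between $L_0$ and $L$, and since fixed-point-freeness is measured entirely by the action on the common point set $V^*=\mathbb{F}_3^4\setminus\{0\}$, such a $D$ is also a $79$-clique in the permutation graph of $L$. It therefore suffices to show that the permutation graph of $L$ carries no $79$-clique.

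First I would invoke the imprimitivity structure recorded above: $L$ admits a block $A$ of size $16$, unique up to the action of $L$. Applying the subclique construction of Section \ref{sec:geom}'s companion (the subclique attached to a block of an imprimitive sharply transitive set) to the set $D\cup\{1\}$, I obtain the subclique $D_A$ of $D$ corresponding to $A$; its restriction to $A$ is sharply transitive on the $16$ points of $A$, so $D_A\cup\{1\}$ has $16$ elements and $D_A$ is a $15$-clique corresponding to $A$. This is the pivot of the argument: any hypothetical $79$-clique of $L$ must contain a $15$-subclique of exactly this shape.

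Next I would split into the two exhaustive cases according to whether the group $\langle D_A\rangle$ generated by this $15$-subclique is a $2$-group. If $\langle D_A\rangle$ is not a $2$-group, then Claim \ref{claim:non2gr} asserts that $D_A$ cannot be extended to a $79$-clique, contradicting $D_A\subseteq D$. If $\langle D_A\rangle$ is a $2$-group, then Claim \ref{claim:2gr}---whose proof in turn disposes of the single exceptional $15$-clique $K^*$ isolated in Claim \ref{claim:15clique}---yields the same impossibility. In either case we contradict the existence of $D$, so the permutation graph of $L$ has no $79$-clique and $G$ cannot be of type (4.j).

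The step that I expect to require the most care is not this final dichotomy, which is purely formal once the three preceding claims are available, but rather the justification of the reduction to $L$: one must be sure that a $79$-clique occurring in any one of the five subgroups is genuinely a $79$-clique of the permutation graph of $L$, which rests precisely on the inclusions $L_0\leq G\leq L$ and on the permutation graph depending only on the action on $V^*$. The genuinely hard work, already absorbed into Claims \ref{claim:non2gr}--\ref{claim:2gr}, is the treatment of the $2$-group case: there a direct computational enumeration of the $79$-clique extensions of $K^*$ is infeasible, and it is replaced by the counting argument showing that two distinct copies $D_A,D_B$ of $K^*$ would force more than $15$ clique elements to fix a common $16$-block, which is impossible.
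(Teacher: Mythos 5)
Your proof is correct and follows essentially the same route as the paper: the paper reduces the five type (4.j) groups to the single group $L=N_{\GL(4,3)}(L_0)$, notes the imprimitivity blocks of size $16$, and then lets Claims \ref{claim:non2gr}, \ref{claim:15clique} and \ref{claim:2gr} dispose of the two exhaustive cases (subclique generating a $2$-group or not), exactly as in your dichotomy. Your write-up merely makes explicit the glue the paper leaves implicit (quasifield $\to$ $79$-clique, monotonicity of the permutation graph under $L_0\leq G\leq L$, and the $15$-subclique attached to a $16$-block); the only blemish is your pointer to Section \ref{sec:geom} for the subclique construction, which actually appears in the section on sharply transitive sets and permutation graphs.
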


The combination of the claims \ref{claim:no_4k4m}, \ref{claim:no_4e} and \ref{claim:noextraspec} yields the proof of the Proposition \ref{prop:except}.

\section{Exhaustive search for cliques and their invariants}
\label{sec:exhaustive}

Let $G\leq \GL(d,p)$ be a transitive linear group. We use the program CLIQUER \cite{Cliquer} to compute all cliques of size $p^d-1$ in the permutation graph $\mathcal{G}$ of $G$. For the exceptional transitive linear groups of Table \ref{tab:ecases}, the result is presented in the second column of Table \ref{tab:maxcls}. Proposition \ref{pr:parast} allows us to reduce our results on cliques in exceptional transitive linear groups modulo parastrophy, as shown in the third column of Table \ref{tab:maxcls}. In column 4, we filtered out those cliques which do not generate the whole group $G$. 

\begin{table}
\begin{tabular}{|l||c|c|c|c|}
\hline
type & \# of cliques & up to parastrophy & proper $G$ & \# CCFPs'\\
\hline\hline
(4.a) & $4;8$ & $2;3$ & $1;0$ & $1;0$ \\ \hline
(4.b) & 12 & 4 & 2 & 2 \\ \hline
(4.c) & 16 & 4 & 3 & 3 \\ \hline
(4.e) & 27648 & 32 & 21 & 20 \\ \hline
(4.f) & 6 & 2 & 0 & 0 \\ \hline
(4.g) & 9 & 3 & 3 & 3 \\ \hline
(4.h) & 64 & 9 & 8 & 8 \\ \hline
(4.l) & 450 & 2 & 2 & 1 \\
\hline
\end{tabular}
\caption{Maximal cliques in exceptional transitive linear groups} \label{tab:maxcls}
\end{table}

In the final step, we compute the \textit{Conway-Charnes fingerprint} of all spread sets of matrices, cf. \cite{CharnesDempwolff,MathonRoyle}. The Conway-Charnes fingerprint is an invariant of the translation plane which can be easily computed from any spread set of matrices. 

\begin{definition}
Given a spread set $\mathcal{S}=\{U_1,\ldots,U_{q-1}\}$ of nonzero matrices, one forms a $(q-1) \times (q-1)$ matrix whose $(i,j)$ entry is $0$, $+1$ or $-1$ according as $\det(U_i-U_j)$ is zero, square or nonsquare, respectively. Border this matrix with a leading row and column of 1s (except for the first entry on the diagonal which remains zero) to form a symmetric $q\times q$ matrix $A$ with $0$ on the diagonal, and $\pm 1$ in every non-diagonal entry. Finally, form the matrix $F = AA^t$ (with the product being taken in the rational numbers). The \emph{fingerprint} of the spread set is the multiset of the absolute values of the entries of $F$.
\end{definition}


The last column of Table \ref{tab:maxcls} contains the number of different Conway-Charnes fingerprints of the spread sets of matrices in the corresponding exceptional transitive linear group $G$. One sees that only for two pairs of spread sets do we obtain the same fingerprint.



Let us denote by $Q_1,Q_1'$ the quasifields of order $3^4$ and by $Q_2,Q_2'$ those of order $2^4$. We compute the corresponding autotopism groups $\mathcal{A}_1, \mathcal{A}_1'$ and $\mathcal{A}_2,\mathcal{A}_2'$. GAP4 shows that $\mathcal{A}_1$ and $\mathcal{A}_1'$ are nonisomorphic groups of order $640$, acting transitively on the $80$ points of $\ell_\infty \setminus \{(0), (\infty)\}$. By Proposition \ref{pr:infstab}, $Q_1,Q_1'$ determine nonisomorphic translation planes. $\mathcal{A}_2$ and $\mathcal{A}_2'$ are both isomorphic to $\mathrm{PSL}(2,7)$. Both groups fix a third point $a,a'$ of the infinite line and act transitively on the remaining $14$ infinite points. Hence, both groups are the stabilizer of a triple of infinite points and Proposition \ref{pr:infstab} applies. As the orbit lengths of $\mathcal{A}_2$ and $\mathcal{A}_2'$ are different, we can conclude that the two translation planes are nonisomorphic.

\section{Right multiplication groups of finite right quasifields}

We compile our results in the following theorem. 

\begin{theorem}
Let $(Q,+,\cdot)$ be a finite right quasifield of order $p^d$ with prime $p$. Then, for $G=\RMlt(Q^*)$, one of the following holds:
\begin{enumerate}
\item $G\leq \mathrm{\Gamma{}L}(1,p^d)$ and the corresponding translation plane is a generalized Andr\'e plane. 
\item $G \triangleright \mathrm{SL}(d/e,p^e)$ for some divisor $e<d$ of $d$ with $e\neq d$.
\item $p$ is odd and $G \triangleright \mathrm{Sp}(d/e,p^e)$ for some divisor $e$ of $d$.
\item $p^d\in \{5^2,7^2,11^2,17^2,23^2,29^2,59^2\}$ and $G$ is one of the seven finite sharply transitive linear groups of Zassenhaus \cite{Zassenhaus}. The corresponding translation planes are called Zassenhaus nearfield planes.
\item $p^d\in \{5^2,7^2,11^2\}$, and $G$ is a solvable exceptional transitive linear group. These quasifields and the corresponding translation planes have been given by M. J. Kallaher \cite{Kallaher}. 
\item $p^d=3^4,19^2$ or $29^2$, and the number of translation planes is $21$, $3$ or $8$, respectively. 
\item $p^d=16$ and $G=A_7$. The corresponding translation planes are the Lorimer-Rahilly and Johnson-Walker planes.
\end{enumerate}
\end{theorem}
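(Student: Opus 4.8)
The plan is to read the seven alternatives directly off the dichotomy supplied by the classification of finite transitive linear groups (Section \ref{sec:trlingr}), and then to fill in, case by case, the structural and enumerative facts accumulated in Sections \ref{sec:nonexistence} and \ref{sec:exhaustive}. Since $G=\RMlt(Q^*)$ is a transitive linear subgroup of $\GL(d,p)$, it lies in one of the classes (1.a), (1.b), (2), (3) or in the exceptional list of Table \ref{tab:ecases}; the whole argument is essentially a bookkeeping of these possibilities against the non-existence results already in hand. The only inputs that still require a word of justification are the geometric labels attached to the alternatives and the passage from parastrophy classes to a count of translation planes.

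First I would dispose of the infinite classes. If $G$ belongs to (1.a)--(3), then Proposition \ref{prop:infcl} asserts that exactly one of $G\leq \mathrm{\Gamma{}L}(1,p^d)$, $G\triangleright \mathrm{SL}(d/e,p^e)$, or ($p$ odd and $G\triangleright \mathrm{Sp}(d/e,p^e)$) holds, the group $G_2(2^e)'$ and the even-characteristic symplectic case being excluded there; these are precisely alternatives (1)--(3). For (1) I would add the geometric identification: when every right multiplication is $\mathbb{F}_p$-semilinear on $\mathbb{F}_{p^d}$, normalising the loop identity to $1$ forces $R_y$ to have the form $x\mapsto x^{\sigma(y)}y$ with $\sigma(y)\in\mathrm{Gal}(\mathbb{F}_{p^d}/\mathbb{F}_p)$, and sharp transitivity of $\mathcal{S}(Q)$ is exactly the defining condition of a generalized Andr\'e system, so $\Pi$ is a generalized Andr\'e plane. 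Alternatives (2) and (3) need no further labelling.

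Next I would treat the exceptional groups. By Proposition \ref{prop:except}, such a $G$ is either regular on $V^*$ or one of the eight candidates of Table \ref{tab:easrmlt}. The regular exceptional groups are exactly the right multiplication groups of the Zassenhaus nearfields (the asterisked entries of Table \ref{tab:ecases}), giving alternative (4). For the non-regular candidates I would invoke the exhaustive clique search of Section \ref{sec:exhaustive} recorded in Table \ref{tab:maxcls}, whose column ``proper $G$'' counts the sharply transitive sets that actually generate the whole $G$, hence the quasifields realising that exceptional group as $\RMlt$. This eliminates case (4.f), whose count is $0$, and retains (4.a),(4.b),(4.c) with $G_0\cong \mathrm{SL}(2,3)$ and $p^d\in\{5^2,7^2,11^2\}$ (the solvable cases, matching Kallaher \cite{Kallaher}, alternative (5)), together with (4.e),(4.g),(4.h) of orders $3^4,19^2,29^2$ and (4.l) of order $2^4$.

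The last, and only genuinely delicate, step is to convert proper parastrophy classes into a count of translation planes. By Proposition \ref{pr:parast} parastrophic spread sets share a $G^\sharp$-orbit, and (Section \ref{sec:geom}) parastrophy implies isomorphism of the associated planes, so the number of planes is at most the number of proper classes of Table \ref{tab:maxcls}. To meet this upper bound I would separate distinct classes by invariants: the Conway--Charnes fingerprint already distinguishes all of them except one pair in order $3^4$ and one pair in order $2^4$. For those residual coincidences I would compute the autotopism groups via Proposition \ref{pr:autotgr}, whose output is a true isomorphism invariant of the plane by Proposition \ref{pr:infstab} --- the two order-$3^4$ planes have non-isomorphic autotopism groups of order $640$, and the two order-$2^4$ planes have autotopism groups $\cong\mathrm{PSL}(2,7)$ with different infinite orbit lengths. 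This yields the counts $21,3,8$ of alternative (6) and the two planes of alternative (7), identified as the Lorimer--Rahilly and Johnson--Walker planes. The main obstacle is exactly this final separation: the cheap fingerprint fails to distinguish all planes, so one must descend to the full autotopism group and verify, through Proposition \ref{pr:infstab}, that it is an invariant of the plane and not merely of the chosen pair of infinite points.
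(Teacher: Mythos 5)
Your proposal follows essentially the same route as the paper's proof: Proposition \ref{prop:infcl} for the infinite classes, Proposition \ref{prop:except} plus the clique counts of Table \ref{tab:maxcls} for the exceptional ones, and then fingerprints together with autotopism groups (via Propositions \ref{pr:autotgr} and \ref{pr:infstab}) to show the parastrophy classes yield pairwise nonisomorphic planes. In fact your write-up is more explicit than the paper's own proof, which compresses most of this into a reference to Sections \ref{sec:nonexistence} and \ref{sec:exhaustive}. Your hand-made argument for case (1) --- normalizing so that $R_y\colon x\mapsto x^{\sigma(y)}y$ with $\sigma(y)$ a field automorphism, which is the defining shape of a generalized Andr\'e system --- is a legitimate substitute for the paper's citation of \cite[Theorem 3.1]{Kallaher}.

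There is, however, one genuine gap: in case (7) you assert that the two nonisomorphic planes of order $16$ with $G=A_7$ are ``identified as the Lorimer--Rahilly and Johnson--Walker planes'' without giving any justification. The clique search and the autotopism-group separation only establish that there are exactly two such planes; nothing in your argument connects them to those two named planes from the literature. This identification is part of the statement being proved, and it is not automatic --- the paper obtains it from \cite[Corollary 4.2.1]{JhaKallaher}, which characterizes the translation planes of order $16$ admitting the relevant collineations. To close the gap you must either invoke that result (as the paper does) or exhibit an explicit isomorphism between your computed spread sets and known models of the Lorimer--Rahilly and Johnson--Walker planes.
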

\begin{proof}
By Proposition \ref{prop:infcl}, (1), (2) or (3) holds if $G$ belongs to one of the infinite classes of finite transitive linear groups. If $G\leq \mathrm{\Gamma{}L}(1,p^d)$ then the corresponding translation plane is a generalized Andr\'e plane by \cite[Theorem 3.1]{Kallaher}. The cases of the Zassenhaus nearfield planes are in (4). The arguments in Sections \ref{sec:nonexistence} and \ref{sec:exhaustive} imply (6) and that there are two quasifields having $A_7$ as right multiplication group. Moreover, the corresponding translation planes are nonisomorphic.  \cite[Corollary 4.2.1]{JhaKallaher} implies that the two translation planes are the Lorimer-Rahilly and Johnson-Walker planes.
\end{proof}

We close this paper with two remarks.
\begin{enumerate}
\item No quasifield $Q$ is known to the author with $\RMlt(Q^*) \triangleright \mathrm{Sp}(d/e,p^e)$. Even the smallest case of $\mathrm{Sp}(4,3)$ is computationally challenging. 
\item The Lorimer-Rahilly and Johnson-Walker translation planes are known to be polar to each other, that is, one spread set of matrices is obtained by transposing the matrices in the other spread set. (See \cite[29.4.4]{Handbook}.)
\end{enumerate}

The GAP programs used in this paper are avaible on the author's web page:
\begin{center}
\verb+http://www.math.u-szeged.hu/~nagyg/pub/rightmlt.html+
\end{center}

\bibliographystyle{plain}

\end{document}